\documentclass{amsart}

\usepackage[left=3.5cm,right=2.5cm,top=2cm,bottom=2cm,includeheadfoot]{geometry}

\usepackage[english]{babel}
\usepackage{csquotes}
\usepackage{amsmath, amssymb, mathtools, amsfonts, amsthm,nicefrac,comment}
\usepackage[mathscr]{eucal}
\usepackage{color}
\usepackage{xcolor}

\usepackage{algorithm} 
\usepackage{algpseudocode} 

\usepackage{subcaption}
\usepackage{enumitem}

\usepackage[final]{showlabels} 

\usepackage{bbm} 
\usepackage{hyperref}
\hypersetup{hidelinks,colorlinks,linkcolor=blue}

\usepackage[noabbrev, nameinlink]{cleveref}
\crefname{asmpt}{\textup{Assumption}}{\textup{Assumptions}}
\creflabelformat{asmpt}{#2\textup{#1}#3} 
\crefname{theorem}{\textup{Theorem}}{\textup{Theorems}}
\creflabelformat{theorem}{#2\textup{#1}#3} 
\crefname{lemma}{\textup{Lemma}}{\textup{Lemmas}}
\creflabelformat{lemma}{#2\textup{#1}#3} 
\crefname{proposition}{\textup{Proposition}}{\textup{Propositions}}
\creflabelformat{proposition}{#2\textup{#1}#3} 
\crefname{cor}{\textup{Corollary}}{\textup{Corollaries}}
\creflabelformat{cor}{#2\textup{#1}#3} 
\crefname{definition}{\textup{Definition}}{\textup{Definitions}}
\creflabelformat{definition}{#2\textup{#1}#3} 
\crefname{remark}{\textup{Remark}}{\textup{Remarks}}
\creflabelformat{remark}{#2\textup{#1}#3} 
\crefname{figure}{\textup{Figure}}{\textup{Figures}}
\creflabelformat{figure}{#2\textup{#1}#3} 
\crefname{table}{\textup{Table}}{\textup{Tables}}
\creflabelformat{table}{#2\textup{#1}#3} 
\crefname{algorithm}{\textup{Algorithm}}{\textup{Algorithms}}
\creflabelformat{algorithm}{#2\textup{#1}#3} 
\crefname{section}{\textup{Section}}{\textup{Sections}}
\creflabelformat{section}{#2\textup{#1}#3} 

\newtheorem{theorem}{Theorem}[section]
\newtheorem{asmpt}[theorem]{Assumption}
\newtheorem{lemma}[theorem]{Lemma}
\newtheorem{cor}[theorem]{Corollary}
\newtheorem{remark}[theorem]{Remark}

\newtheorem{definition}[theorem]{Definition}
\newtheorem{proposition}[theorem]{Proposition}

\captionsetup[subfigure]{subrefformat=simple,labelformat=simple}

\numberwithin{equation}{section}
\numberwithin{algorithm}{section}
\numberwithin{figure}{section}

\makeatletter
\let\@citexOld\@citex
\def\@citex[#1]#2{\textup{\@citexOld[#1]{#2}}}
\makeatother

\usepackage{ulem}
\normalem

\newcommand{\Cb}{\mathbb{C}}
\newcommand{\Hb}{\mathbb{H}}
\newcommand{\Lb}{\mathbb{L}}
\newcommand{\Rb}{\mathbb{R}}
\newcommand{\Nb}{\mathbb{N}}

\newcommand{\Sc}{{\mathcal{S}}}

\newcommand{\sAK}{{\mathscr{A}_K}}
\newcommand{\sAk}{{\mathscr{A}_k}}

\newcommand{\sB}{{\mathscr B}}

\newcommand{\sP}{{\mathscr{P}}}
\newcommand{\sS}{{\mathscr S}}

\newcommand{\sY}{{\mathscr Y}}

\newcommand{\Ead}{{\mathscr{E}_{\mathrm{ad}}}}


\newcommand{\standardNorm}[1]{\left\lVert #1 \right\rVert}
\newcommand{\standardScalarProduct}[2]{\left\langle #1 , #2 \right\rangle}




\newcommand{\be}{{\boldsymbol{\mathsf e}}}

\newcommand{\bg}{{\boldsymbol{g}}}
\newcommand{\bq}{{\boldsymbol{q}}}
\newcommand{\bv}{{\boldsymbol{v}}}
\newcommand{\by}{{\boldsymbol{y}}}

\newcommand{\balpha}{{\boldsymbol{\alpha}}}

\newcommand{\bbeta}{{\boldsymbol{\beta}}}

\newcommand{\beps}{{\boldsymbol{\varepsilon}}}
\newcommand{\bvarphi}{{\boldsymbol{\varphi}}}
\newcommand{\bPhi}{{\boldsymbol{\Phi}}}

\newcommand{\epsMax}{{\varepsilon_{\mathsf{max}}}}
\newcommand{\alphaMax}{{\alpha_{\mathsf{max}}}}

\newcommand{\rmd}{\, \mathrm{d}}

\allowdisplaybreaks
\makeatletter
\renewenvironment{algorithm}
{%
    \bigskip
    \begin{center}
    \refstepcounter{algorithm}
        \hrule height.8pt depth0pt \kern2pt
        \renewcommand{\caption}[2][\relax]{
        {\raggedright\textbf{\fname@algorithm~\thealgorithm} ##2\par}%
        \ifx\relax##1\relax 
         \addcontentsline{loa}{algorithm}{\protect\numberline{\thealgorithm}##2}%
        \else 
         \addcontentsline{loa}{algorithm}{\protect\numberline{\thealgorithm}##1}%
        \fi
        \kern2pt\hrule\kern2pt
        }
}
{%
     \kern2pt\hrule\relax
     \bigskip
   \end{center}
}
\makeatother

\newcommand{\thirdsPlotsfactor}{0.32}

\makeatletter
\renewcommand{\email}[2][]{%
  \ifx\emails\@empty\relax\else{\g@addto@macro\emails{,\space}}\fi%
  \@ifnotempty{#1}{\g@addto@macro\emails{\textrm{(#1)}\space}}%
  \g@addto@macro\emails{#2}%
}
\makeatother
\usepackage[]{amsaddr}

\begin{document}

\title[Reconstruction nonlinear operators in semilinear models]{Reconstruction of unknown monotone nonlinear operators in semilinear elliptic models using optimal inputs}


\author{Jan Bartsch}
\address[A1]{Department of Mathematics, University of Konstanz, Universitätsstr. 10, 78464 Konstanz, Germany}
\email[A1]{jan.bartsch@uni-konstanz.de}

\author{Simon Buchwald}
\address[A2]{Department of Mathematics, University of Konstanz, Universitätsstr. 10, 78464 Konstanz, Germany}
\email[A2]{simon.buchwald@uni-konstanz.de}

\author{Gabriele Ciaramella}

\address[A3]{ MOX Lab, Dipartimento di Matematica, Politecnico di Milano, Piazza Leonardo da Vinci 32, 20133, Milano, Italy}
\email[A3]{gabriele.ciaramella@polimi.it}

\author{Stefan Volkwein}
\address[A4]{Department of Mathematics, University of Konstanz, Universitätsstr. 10, 78464 Konstanz, Germany}
\email[A4]{stefan.volkwein@uni-konstanz.de}

\begin{abstract}
    Physical models often contain unknown functions and relations. 
    The goal of our work is to answer the question of how one should excite or control a system under consideration in an appropriate way to be able to reconstruct an unknown nonlinear relation. 
    To answer this question, we propose a greedy reconstruction algorithm within an offline-online strategy. 
    We apply this strategy to a two-dimensional semilinear elliptic model.
    Our identification is based on the application of several space-dependent excitations (also called controls).
    These specific controls are designed by the algorithm in order to obtain a deeper insight into the underlying physical problem and a more precise reconstruction of the unknown relation.
    We perform numerical simulations that demonstrate the effectiveness of our approach which is not limited to the current type of equation. 
    Since our algorithm provides not only a way to determine unknown operators by existing data but also protocols for new experiments, it is a holistic concept to tackle the problem of improving physical models.
\end{abstract}

\subjclass{
  35K57,   	
  35R30,   	
  49N45,   	
  49M41,   	
  93B30   	
}

\keywords{
\small Reaction-diffusion equations, greedy reconstruction algorithm, optimal control of partial differential equations, semilinear partial differential equations}

\thanks{This work was financed by the Deutsche Forschungsgemeinschaft (DFG) within SFB 1432, Project-ID 425217212. The present research is part of the activities of “Dipartimento di Eccellenza 2023-2027"}

\date{\small\today}

\maketitle


\section{Introduction}

The identification of unknown operators in semilinear systems plays a fundamental role in various scientific disciplines, including physics, chemistry, and biology \cite{BerettaVogelius1991InverseProblemMHD,ClermontZenker2015InvProbBiology,SanotsaWeitz2011InverseProblemRectionKinetics}. 
Accurately characterizing these operators is crucial for understanding and predicting complex system behavior. 
However, they often remain unknown or only partially known, necessitating efficient and reliable reconstruction methods. 

Many techniques exist for such identification, drawing from the fields of inverse problems and optimization. 
Traditional approaches involve inverse problem methods, inferring the operator from measured outputs under controlled inputs. 
Techniques like gradient descent, least squares minimization, or Bayesian approaches are commonly employed \cite{HaberAscherOldenburg2000OptimizationNonlinInverse,MandtHoffmannBlei2017SBDBayesian}.
Optimization methods formulate the identification problem as an optimization task, minimizing the discrepancy between model outputs and data. 

With the recent surge of machine learning, neural networks are increasingly applied to identification tasks \cite{KumpatiNarendra1990IdentificationNonLin, QuarantaLacarbonara2020reviewIntelligenceIdentification}.
While these methods have achieved significant progress, a common challenge lies in data quality. Real-world data often suffers from noise and uncertainties, leading to inaccuracies in the reconstructed operator. 
Additionally, limited data availability due to cost or time constraints can hinder accurate identification. 
Furthermore, biased data can lead to biased estimates of the operator, necessitating careful data collection and analysis procedures.

This paper extends the approach introduced in \cite{MadaySalomon2009GreedyQuantum} and optimized in \cite{BuchwaldCiaramella2021GreedyReconstruction}  to reconstruct unknown nonlinear operators in semilinear elliptic models, addressing some of the challenges mentioned above. 
We leverage the power of optimal control techniques, formulating the reconstruction problem as an optimization problem where the objective function measures the discrepancy between the model output and observed data. 
By strategically designing control functions and employing optimization algorithms, we aim to drive the model output toward observed data that provides significant insights into the underlying physical system, thereby revealing the characteristics of the unknown operator.
Our method develops optimal protocols to generate data that yields as much insight into the system as possible.
In other words, the goal of the algorithm is to convexify the objective of the identification problem, i.e., increasing the region of convexity around the minimum.
In particular, the basin of attraction of the minimum is extended and the conditioning of the problem in this region is improved. 
This capability of the algorithm is further explained and visualized in the numerical section below and in  \cite{BuchwaldCiaramella2021GreedyReconstruction}.
The optimized algorithm was used to identify different linear and nonlinear unknown operators and distributions \cite{BuchwaldCiaramella2021ReconstructionofA,BuchwaldCiaramella2021OGR_SpinIdentification,Buchwald2024CodeReconstructionSpin}.
We extend the existing results by considering a system of elliptic partial differential equations (PDEs) and reconstructing infinite-dimensional objects.

The proposed approach offers several advantages. Unlike traditional methods that require large amounts of high-quality data, our framework incorporates an active learning strategy. 
The algorithm selects new data points to collect, focusing on those that provide the most information about the unknown operator, leading to reduced data dependency. 
Additionally, the framework provides theoretical guarantees for convergence to the true operator under certain conditions.

Building upon this foundation, we present a computationally efficient numerical scheme to solve the formulated optimization problem, enabling implementation for various semilinear elliptic models. 
Our semilinear PDE can be motivated as the steady-state solution to PDEs describing models from epidemics, biochemical systems, and nuclear reactor models or general reaction-diffusion models \cite{MartinPierre1992NonLinReactDiff,Pao1982NonlinReactDiffSystems}.
Similar models are also used to describe membrane-resonators; see, e.g., \cite{YangScheer2019SpatialModlationMembraneResonators}.
We refer to \cite{Alessandrini1989IdentificationEllipticEquation,IsakovNachman19952DSemilinInverseProblem,Johansson2023inverseSemilinearEllipticPDEGeneral,Roesch1996StabilityIdentificationHeatTransfer} and the references therein for existing investigations on the identification of unknown nonlinearities in semilinear PDEs. 
Furthermore, we refer the reader to the work \cite{Isakov2017InverseProblemPDE} on general inverse problems with PDEs.

The paper is organized as follows. 
In the next section, we introduce our specific setting including the formulation of our reconstruction problem.
In \cref{sec:AnalysisTheory}, we analyze the semilinear PDE model and state the existence and uniqueness of solutions in \cref{thm:Energy_laplace_solution}.
Furthermore, we present continuity results for the control-to-state map, the parameter-to-state map, and the inverse parameter-to-state map in \cref{thm:Continuity_control-to-state-map,thm:continuity_parameter_to_state,thm:stability_inverse_orig}, respectively.
In \cref{sec:GreedyAlgorithm}, we describe our numerical method to solve the reconstruction problem using a greedy approach (cf. \cref{algo:ONGR}).
Afterwards, in \cref{sec:NumApprox}, we report the numerical approximation in terms of spatial discretization and numerical strategy to solve the governing equation.
To validate our strategy, we perform numerical experiments and report the results in \cref{sec:NumExperiments}.
A section of conclusions finishes this work.

\subsection*{Notation}

For a natural number $n \in \Nb$, we define $[n]\coloneqq \{1,\ldots,n\}$.
For a vector $\balpha \in \Rb^n$, we denote by $\|\cdot\|_p$ the standard $\ell^p$ norms for $1 \leq p \leq \infty$.
Furthermore, we denote for vectors $\balpha,\widetilde\balpha \in \Rb^n$ the standard scalarproduct in $\Rb^n$ as $\balpha^\top \widetilde\balpha = \balpha \cdot \widetilde\balpha=\standardScalarProduct{\balpha}{\widetilde\balpha}_2$.
Whenever two vectors in $\Rb^n$ are compared with each other, then the comparison has to be understood componentwise.
We denote with $B_{L}(y)$ the open ball with respect to the Euclidean norm around $y \in \Rb^n$ with radius $L>0$, i.e. $B_{L}(y) = \{\tilde y \in \Rb^2: \|\tilde{y} - y\|_2^2 < L \}$.

For a domain $\Omega \in \Rb^n$ and $1\leq p \leq \infty$, we set $L^p(\Omega)\coloneqq L^p(\Omega;\mathbb{R})$ and $\Lb^p(\Omega) \coloneqq L^p(\Omega) \times L^p(\Omega)$; analogously, we define $\Hb^k(\Omega)=H^k(\Omega)\times H^k(\Omega)$ for $k\in\Nb$. 
Furthermore, we set $\Hb_0^1(\Omega) \coloneqq H_0^1(\Omega) \times H_0^1(\Omega)$ and $\Cb(\bar{\Omega}) \coloneqq C(\bar{\Omega})\times C(\bar{\Omega})$.
The space $H_0^1(\Omega)$ is equipped with the inner product
\begin{align*}
    {\langle \varphi, \psi \rangle}_{H_0^1(\Omega)} \coloneqq \int_\Omega \nabla\varphi \cdot \nabla\psi\rmd x\quad\text{for }\varphi,\psi \in H_0^1(\Omega).
\end{align*}
For more details, we refer to the standard notation for Lebesgue and Sobolev spaces; see, e.g., \cite{AdamsFournier2003SobolevSpaces,Evans2010PDEs}.

\section{Formulation of the problem}
\label{sec:Setting}

We consider a convex two-dimensional spatial domain $\Omega \subsetneq \mathbb{R}^2$ with Lipschitz-continuous boundary $\Gamma=\partial\Omega$. 
For $x=(x_1,x_2)^\top \in \Omega$ and $\by(x)=(y_1(x),y_2(x))^\top$, we consider the following system of semilinear PDEs with homogeneous Dirichlet boundary conditions
\begin{subequations}
    \begin{align}
        -\Delta \by (x) + \bg(\by(x)) &=  \beps(x) && \text{in } \Omega,\\
        \by(x) &= 0 && \text{on } \Gamma,
    \end{align}
    \label{eq:Laplace_Model}%
\end{subequations}
\noindent where $\beps: \Rb^2 \rightarrow \Rb^2$ is a control input and $\bg:\Rb^2\rightarrow\Rb^2$ an unknown nonlinearity.
This structure of the problem is motivated by the stationary Lotka-Volterra system, that we consider in the numerical example below.
In this setting, the variables $y_1$ and $y_2$ have the significance of the distribution of two different species that interact with the nonlinearity $\bg_\balpha$.
For $\beps = (\varepsilon_1,\varepsilon_2)^\top$, we define the set of admissible controls as
\begin{align}
	\Ead\coloneqq\big\{ \beps \in \Lb^2(\Omega)\;\big\vert\; 
	\beps_\mathsf a\le\beps(x)\le\beps_\mathrm b \; 
    \text{ for almost all (f.a.a.) } \; x \in \Omega \big\},
    \label{eq:admissible_set_controls}
\end{align}
with given bounds $\beps_\mathsf a, \beps_\mathrm b \in \Rb^2$, $\beps_\mathsf a \leq \beps_\mathsf b$.
Notice that the elements in $\Ead \subset \Lb^\infty(\Omega)$ are uniformly bounded in $\Lb^\infty(\Omega)$ by $\epsMax\coloneqq \max\{\|\beps_\mathsf a\|_\infty,\|\beps_\mathsf b\|_{\infty}\}$. 
%

The nonlinearity $\bg$ is assumed to lie in a $K$-dimensional space $\mathfrak{G}$.
Thus, given a basis $\mathcal{G} =\{\bvarphi_1,\bvarphi_2,\ldots,\bvarphi_K\}$ of $\mathfrak{G}$ with functions $\bvarphi_i:\mathbb R^2\to\mathbb \Rb^2$, we can write
\begin{align}
	\bg(y) = \bg_{\balpha}(y) 
    \coloneqq \sum_{j=1}^{K} \alpha_j \bvarphi_j(y)=\boldsymbol\Phi(y)\balpha\in\Rb^2,
	\quad y\in\Rb^2,
    \label{eq:linearcomb_nonlinearity}
\end{align}
for some coefficient vector $\balpha=(\alpha_j)_{j=1}^K \in \Rb^K$ and $\boldsymbol\Phi(y) = (\bvarphi_1(y),\ldots,\bvarphi_K(y))\in\Rb^{2\times K}$.
Our true unknown nonlinearity is now assumed to be given by
\begin{align*}
    \bg_{\balpha^\star}(y) 
    = \sum_{j=1}^{K} \alpha^\star_j \bvarphi_j(y)
\end{align*}
for an unknown coefficient vector $\balpha^\star =(\alpha_j^\star) \in \Rb^K$.
Notice that $\bg$ is locally Lipschitz continuous if all elements in $\mathcal{G}$ are polynomials or piecewise linear and continuous functions.

For the coefficients $\balpha \in \Rb^K$, we assume that they are elements of
\begin{align*}
	\sAK\coloneqq\big\{ \balpha \in \Rb^K\;\big\vert \; 0 \le \alpha_i \le \alphaMax
    \text{ for }i\in[K]\big\},
\end{align*}
with a scalar $\alphaMax \geq0$.
Notice that $\sAK$ is nonempty, compact and convex.
Further, we denote by
$
	\by^{\balpha,\beps}
$
the solution to \eqref{eq:Laplace_Model} using the nonlinearity $\bg_\balpha$ and applying controls (inputs) $\beps \in \Ead$ (cf. \cref{sec:AnalysisTheory}).
To identify the unknown true nonlinearity $\bg_{\balpha^\star}$, we generate a $K$-dimensional set of control functions $\{\beps^m\}_{m=1}^K \subset \Ead$ to perform $K$ (laboratory) experiments and obtain the data $\by^{\beps^m}_\star$ for $m\in[K]$. 
Then the nonconvex parameter identification problem in order to estimate the nonlinearity $\bg_{\balpha^\star}$ is given by
\begin{align}
	\min_{\balpha \in \sAK} \;
	\sum_{m=1}^K \standardNorm{\by^{\balpha,\beps^m} - \by^{\beps^m}_{\star}}_{\Lb^2(\Omega)}^2.
    \label{eq:final_identification}
\end{align}
Since $\balpha \in \sAK$ is a vector in $\Rb^K$, we did not add an additional regularisation term depending on $\balpha$.
The controls $\{\beps^m\}_{m=1}^K\subset\Ead$ will be chosen properly by the iterative \cref{algo:ONGR} presented in \cref{sec:GreedyAlgorithm}.
%

\section{Analysis of the governing equation}
\label{sec:AnalysisTheory}

From now on, we always assume a nonlinearity
\begin{align*}
    \bg(y) = \bg_\balpha(y) = \sum_{j=1}^K \alpha_j \bvarphi_j(y),
    \qquad
    \text{for } \balpha \in \sAK.
\end{align*}
We omit the subscript $\balpha$ in the first part of this section.
Now, we analyze the model \eqref{eq:Laplace_Model}.
In particular, we recall results about the existence and uniqueness together with wellposedness results and continuity properties of the control-to-state map. 
Afterwards, we deal with the Lipschitz continuity (stability) of the equation with respect to the parameters $\balpha$. 
Specifically, we formulate the wellposedness of the parameter-to-state map as well as of its inverse, the state-to-parameter map. 
We refer to \cite{Alessandrini1989IdentificationEllipticEquation,IsakovNachman19952DSemilinInverseProblem,Johansson2023inverseSemilinearEllipticPDEGeneral,Roesch1996StabilityIdentificationHeatTransfer} for similar investigations. 
Furthermore, we refer the reader to the work \cite{Isakov2017InverseProblemPDE} on general inverse problems governed by PDEs.

\subsection{Existence of solutions and wellposedness}
\label{sec:existence_wellposedness}

We apply the results of \cite{CasasRoesch2020NonmonotoneSemilinearElliptic}; see also \cite[Chapter 4]{Treoltzsch2010OCP_PDE}. 
For the basis functions and the nonlinearity $\bg$, we state the following assumptions.

\begin{asmpt}
	\label{asmpt:basis_elements}
	\begin{enumerate}[label=\textup{\arabic*)}]
		\item\label{item:Basis_elements_contdiff}
		The basis elements $\bvarphi_j$ are continuously differentiable.
        This implies in particular that they are \emph{(locally) Lipschitz continuous}, i.e., there exist constants $L_{\varphi_j}>0$, such that for all $j \in [K]$ and $y \in \Rb^2$ there exists a constant $\tilde L>0$ such that for all $\widetilde{y} \in B_{\tilde L}(y)$ it holds that
		\begin{align*}
			{\|\bvarphi_j(y)-\bvarphi_j(\widetilde{y})\|}_2
            \leq
			L_{\varphi_j}\,{\|y-\widetilde{y}\|}_2.
		\end{align*}
		We set $L_\varphi \coloneqq \max_{j \in [K]} L_{\varphi_j}$.
		\item\label{item:Basis_elements_bounded} The basis elements are \emph{bounded} from below and above, i.e., there exist vectors $c_{\varphi_j},C_{\varphi_j} \in \Rb^2$ with only positive entries and $c_{\varphi_j}<C_{\varphi_j}$, such that for all $j \in [K]$ and $y\in\Rb^2$ it holds that
		\begin{align*}
			c_{\varphi_j}\leq\bvarphi_j(y) \leq C_{\varphi_{j}}.
		\end{align*}
		We set $c_\phi = \min_{j \in [K]} \min \{(c_{\phi_j})_1,(c_{\phi_j})_2\}$
		and $C_\varphi \coloneqq \max_{j \in [K]} \|C_{\varphi_j}\|_\infty$.
		Notice that this in particular implies that
		 \begin{align*}
			\forall M>0 \; \exists \phi_M >0 : \|\bvarphi_j(y)\|_2 \leq \phi_M
			\; \forall \|y\|_2 \leq M.
		\end{align*}
		\item\label{item:monotonicity_phi}
        The basis elements are monotone non-decreasing functions, i.e. we have $\left(\bvarphi_j(y) - \bvarphi_j(\widetilde y)\right) \cdot\left( y-\widetilde y \right) \geq 0$ for $y,\widetilde y \in \Rb^2$ and $j\in[K]$.
	\end{enumerate}
\end{asmpt}

For the nonlinearity $\bg:\Rb^2 \rightarrow \Rb^2$, we impose the following:
\begin{asmpt}
	\label{asmpt:monotonicity_nonlinearity}
    It holds that $\bg \in C^1(\Rb^2;\Rb^2)$ and that the Jacobian matrix $\nabla \bg (y) \in \Rb^{2\times 2}$ of the nonlinearity $\bg$ is positive semi-definite.
\end{asmpt}
These hypotheses on the nonlinearity and the definition of $\Ead$ lead to the following statement.
\begin{cor} \label{cor:assumptions_CasasRoesch}
    Suppose that \cref{asmpt:basis_elements,asmpt:monotonicity_nonlinearity} hold true.
    Then it follows that
    \begin{enumerate}[label=\textup{\arabic*)}]
        \item\label{item:G_continuous} The nonlinearity $\bg$ and its derivative $\bg'$ are continuous in $\Rb^2$.
		\item\label{item:G_bounded} The nonlinearity $\bg$ satisfies
         \begin{align*}
            \forall M>0 \; \exists \varrho_M >0 : \|\bg(y)\|_2 \leq \varrho_M
            \; \forall \|y\|_2 \leq M.
        \end{align*}
		\item\label{item:G_monotone} The nonlinearity $\bg$ is monotone non-decreasing.
	\end{enumerate}
\end{cor}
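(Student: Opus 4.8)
The plan is to derive each of the three conclusions directly from the representation $\bg = \bg_\balpha = \sum_{j=1}^K \alpha_j \bvarphi_j$ with $\balpha \in \sAK$, so that $0 \le \alpha_j \le \alphaMax$ for all $j \in [K]$. All three statements then follow from the corresponding property of each basis element $\bvarphi_j$ (from \cref{asmpt:basis_elements}) together with the fact that a nonnegative linear combination preserves continuity, local boundedness, and monotonicity.

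For item \ref{item:G_continuous}, I would observe that each $\bvarphi_j$ is continuously differentiable by \cref{asmpt:basis_elements}\ref{item:Basis_elements_contdiff}, hence so is the finite linear combination $\bg_\balpha$; in particular $\bg$ and $\bg'$ are continuous. (Here \cref{asmpt:monotonicity_nonlinearity} already grants $\bg \in C^1$, but the point is that this is inherited from the structural assumption on $\cG$.) For item \ref{item:G_bounded}, I would fix $M > 0$ and use \cref{asmpt:basis_elements}\ref{item:Basis_elements_bounded}, which provides $\phi_M > 0$ with $\|\bvarphi_j(y)\|_2 \le \phi_M$ for all $\|y\|_2 \le M$ and all $j \in [K]$. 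Then for $\|y\|_2 \le M$,
\begin{align*}
    \|\bg_\balpha(y)\|_2 \le \sum_{j=1}^K |\alpha_j|\,\|\bvarphi_j(y)\|_2 \le \alphaMax\, K\, \phi_M \eqqcolon \varrho_M,
\end{align*}
which is the claimed bound. For item \ref{item:G_monotone}, I would use \cref{asmpt:basis_elements}\ref{item:monotonicity_phi}: for $y,\widetilde y \in \Rb^2$,
\begin{align*}
    \big(\bg_\balpha(y) - \bg_\balpha(\widetilde y)\big)\cdot(y - \widetilde y)
    = \sum_{j=1}^K \alpha_j \big(\bvarphi_j(y) - \bvarphi_j(\widetilde y)\big)\cdot(y-\widetilde y) \ge 0,
\end{align*}
since each summand is a product of the nonnegative scalar $\alpha_j$ with a nonnegative term. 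Hence $\bg$ is monotone non-decreasing.

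There is essentially no obstacle here; the statement is a routine bookkeeping consequence of the assumptions, and the only mild subtlety is uniformity of the constants ($L_\varphi$, $c_\varphi$, $C_\varphi$, $\phi_M$, $\varrho_M$) in $j$, which is already built into \cref{asmpt:basis_elements} via the maxima/minima over $j \in [K]$ and into $\sAK$ via the uniform bound $\alphaMax$. The role of this corollary is to certify that the hypotheses of \cite{CasasRoesch2020NonmonotoneSemilinearElliptic} are met uniformly over all admissible parameters $\balpha \in \sAK$, which is what will be needed for the wellposedness and continuity results (\cref{thm:Energy_laplace_solution}, \cref{thm:continuity_parameter_to_state}) that follow.
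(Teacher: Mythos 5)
Your argument is correct, and parts \ref{item:G_continuous} and \ref{item:G_bounded} match the paper's (very terse) proof in substance: continuity of $\bg$ and $\bg'$ from \cref{asmpt:basis_elements}-\ref{item:Basis_elements_contdiff} (and \cref{asmpt:monotonicity_nonlinearity}), and the local bound from \cref{asmpt:basis_elements}-\ref{item:Basis_elements_bounded} via the finite nonnegative linear combination. The genuine difference is in part \ref{item:G_monotone}: the paper does not argue termwise from \cref{asmpt:basis_elements}-\ref{item:monotonicity_phi}, but instead invokes \cite[Proposition 12.3]{RockafellarWets1998VariationalAnalysis}, i.e.\ the characterization that a $C^1$ map with positive semi-definite Jacobian is monotone, which uses \cref{asmpt:monotonicity_nonlinearity} directly. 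Your route is more elementary and shows that monotonicity of $\bg_\balpha$ already follows from the monotonicity of the basis elements together with $\alpha_j \ge 0$ on $\sAK$, without appealing to \cref{asmpt:monotonicity_nonlinearity} at all; the paper's route, conversely, would work even if the individual $\bvarphi_j$ were not monotone, as long as the assembled Jacobian $\nabla\bg$ stays positive semi-definite. Both are valid under the stated hypotheses, and your observation that the uniformity in $j$ and in $\balpha$ is already encoded in the constants of \cref{asmpt:basis_elements} and in $\alphaMax$ is exactly the point of the corollary.
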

\begin{proof}
    Part \ref{item:G_continuous} follows directly from \cref{asmpt:basis_elements}-\ref{item:Basis_elements_contdiff} and \cref{asmpt:monotonicity_nonlinearity}. 
    Part \ref{item:G_bounded} follows from \cref{asmpt:basis_elements}-\ref{item:Basis_elements_bounded}.
    The part \ref{item:G_monotone} follows from \cite[Proposition 12.3]{RockafellarWets1998VariationalAnalysis}.
\end{proof}

\begin{remark}
	\begin{enumerate}[label=\textup{\arabic*)}]
		\item From \cref{asmpt:basis_elements}-\ref{item:Basis_elements_bounded} it follows that for all $j \in [K]$, there exist a constant $I_{\varphi_{j}}>0$ such that for any bounded subset $\Upsilon \subset \Rb^2$ it holds
		\begin{align}
			\label{eq:integral_constant_stability}
			     \|\bvarphi_j(y)\|_{\Lb^2(\Omega)} \leq I_{\varphi_{j}}.
		\end{align}
		We define $I_\varphi \coloneqq \max_{j \in [K]} I_{\varphi_{j}}$. 
		\item We need $C_{\varphi_{j}}$ for \eqref{eq:integral_constant_stability} and $c_{\varphi_{j}}$ for \cref{thm:stability_inverse_orig}.
		\item The monotonicity of \cref{cor:assumptions_CasasRoesch}-\ref{item:G_monotone} is needed for the uniqueness of solutions to \eqref{eq:Laplace_Model} in \cref{thm:Energy_laplace_solution} below.
	\end{enumerate}
\end{remark}

Whenever we attempt to solve \eqref{eq:Laplace_Model}, we consider its weak formulation \cite[Chapter 4]{Treoltzsch2010OCP_PDE}:
\begin{definition}
    \label{def:weak_solution}
    We introduce the bilinearity
    \begin{align*}
    	a[\by,\bv]\coloneqq 
    	\int_{\Omega} \standardScalarProduct{\nabla\by(x)}{\nabla \bv(x)}_{\mathsf F} \rmd x 
    	\qquad\qquad
    	\text{for }\by,\bv\in\Hb_0^1(\Omega).
    \end{align*}
    A function $\by=(y_1,y_2)^\top \in \Hb_0^1(\Omega)$ is called a \emph{weak solution} of \eqref{eq:Laplace_Model} if the following variational equality is fulfilled
    \begin{align}
         a[\by,\bv]
         +
          \int_\Omega \bv(x) \cdot \bg(\by(x)) \rmd x
    	= 
     \int_\Omega \bv(x) \cdot \beps(x) \rmd x
    	\quad
    	\forall \bv \in \Hb_0^1(\Omega),
        \label{eq:weak_solution}
    \end{align}
    for fixed $\beps \in \Ead$ and fixed $\bg$ fulfilling \cref{asmpt:basis_elements,asmpt:monotonicity_nonlinearity}. 
    
\end{definition}
In \cref{def:weak_solution}, we denote with $\standardScalarProduct{\cdot\,}{\cdot}_{\mathsf F}$ the Frobenius inner product of matrices and furthermore with $\nabla \by$ the Jacobian matrix of $\by$.
An important property of the bilinearity $a[\cdot\,,\cdot]$ needed in the sequel is the existence of a constant $C_E>0$ such that
\begin{align*}
    a[\bv,\bv] \geq C_E \, \|\bv\|_{\Hb^1_0(\Omega)}^2
    \qquad
    	\forall \bv \in \Hb_0^1(\Omega).
\end{align*}
This property is also called the \emph{ellipticity} of the bilinearity.
In our specific case, it even holds that
\begin{align}
	a[\bv,\bv] = \|\bv\|_{\Hb_0^1(\Omega)}^2
	\qquad
	\forall \bv \in \Hb_0^1(\Omega).
	\label{eq:ellipticity_bilinearity}
\end{align}
 Next, we formulate our existence and uniqueness result in the following statement.
 We introduce the solution space
\begin{align*}
    \sY \coloneqq \Hb^2(\Omega) \cap \Hb^1_0(\Omega),
\end{align*}
equipped with the norm
\begin{align*}
    \standardNorm{\bv}_{\sY} \coloneqq 
    \|\Delta \bv\|_{\Lb^2(\Omega)}.
\end{align*}
We remark that the norm $\|\cdot\|_{\sY}$ is equivalent to the standard $\Hb^2(\Omega)$ norm; see, e.g., \cite[Chapter 3]{Grisvard2011EllipticProblemsNonsmoothDomains}.
Furthermore, recall that we use the following norm on $\Hb_0^1(\Omega)$
\begin{align*}
    \|\bv\|_{\Hb^1_0(\Omega)} \coloneqq \|\nabla\bv\|_{\Lb^2(\Omega)}.
\end{align*}
\begin{theorem}
	\label{thm:Energy_laplace_solution}
	Suppose that \cref{asmpt:basis_elements,asmpt:monotonicity_nonlinearity} hold. 
    Then there exists a constant $C_0>0$, independent of $\beps$ and $\bg$, such that for all controls $\beps \in \Ead$ the equation \eqref{eq:Laplace_Model} possesses a unique weak solution $\by \in \sY$ in the sense of \cref{def:weak_solution} satisfying
	\begin{align*}
		{\|\by\|}_{\sY} 	\leq C_0.
	\end{align*}
\end{theorem}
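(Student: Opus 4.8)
The plan is to establish existence via a monotone operator / Browder--Minty argument (or, equivalently, by invoking the cited results of Casas--R\"osch), uniqueness from monotonicity, and the a priori bound in $\sY$ from elliptic regularity. I would proceed as follows.

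\textbf{Step 1: Existence and uniqueness of a weak solution in $\Hb_0^1(\Omega)$.} Define the operator $\mathcal{A}:\Hb_0^1(\Omega)\to\Hb^{-1}(\Omega)$ by $\langle\mathcal{A}\by,\bv\rangle \coloneqq a[\by,\bv]+\int_\Omega \bv\cdot\bg(\by)\rmd x$. By \cref{cor:assumptions_CasasRoesch}-\ref{item:G_continuous} and the growth bound \ref{item:G_bounded}, together with the Sobolev embedding $\Hb_0^1(\Omega)\hookrightarrow\Lb^p(\Omega)$ for all $p<\infty$ in two dimensions, the Nemytskii operator $\by\mapsto\bg(\by)$ is well-defined, bounded and continuous from $\Hb_0^1(\Omega)$ into $\Lb^2(\Omega)\hookrightarrow\Hb^{-1}(\Omega)$, so $\mathcal{A}$ is bounded, hemicontinuous (indeed demicontinuous). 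Monotonicity of $\mathcal{A}$ follows by combining the ellipticity identity \eqref{eq:ellipticity_bilinearity} with \cref{cor:assumptions_CasasRoesch}-\ref{item:G_monotone}: $\langle\mathcal{A}\by-\mathcal{A}\bz,\by-\bz\rangle\ge\|\by-\bz\|_{\Hb_0^1(\Omega)}^2\ge0$, which is in fact \emph{strict}, giving uniqueness immediately. Coercivity comes from the same computation applied with $\bz=0$: $\langle\mathcal{A}\by,\by\rangle = \|\by\|_{\Hb_0^1(\Omega)}^2 + \int_\Omega\by\cdot\bg(\by)\rmd x \ge \|\by\|_{\Hb_0^1(\Omega)}^2 + \int_\Omega\by\cdot\bg(0)\rmd x \ge \|\by\|_{\Hb_0^1(\Omega)}^2 - C\|\by\|_{\Hb_0^1(\Omega)}$, using monotonicity of $\bg$ to drop the term $\int\by\cdot(\bg(\by)-\bg(0))\ge0$ and boundedness of $\bg(0)$. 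Hence $\langle\mathcal{A}\by,\by\rangle/\|\by\|_{\Hb_0^1(\Omega)}\to\infty$. The Browder--Minty theorem then yields a unique $\by\in\Hb_0^1(\Omega)$ with $\mathcal{A}\by=\beps$ in $\Hb^{-1}(\Omega)$, i.e.\ the weak solution. Alternatively one simply cites \cite{CasasRoesch2020NonmonotoneSemilinearElliptic} after checking their hypotheses hold, which is exactly the content of \cref{cor:assumptions_CasasRoesch}.

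\textbf{Step 2: $\Hb_0^1$ bound independent of $\beps$ and $\bg$.} Test \eqref{eq:weak_solution} with $\bv=\by$ and use \eqref{eq:ellipticity_bilinearity}: $\|\by\|_{\Hb_0^1(\Omega)}^2 = \int_\Omega\by\cdot\beps\rmd x - \int_\Omega\by\cdot\bg(\by)\rmd x$. Write $\int\by\cdot\bg(\by) = \int\by\cdot(\bg(\by)-\bg(0)) + \int\by\cdot\bg(0)\ge \int\by\cdot\bg(0)$ by monotonicity, so $\|\by\|_{\Hb_0^1(\Omega)}^2 \le \int_\Omega\by\cdot(\beps-\bg(0))\rmd x \le \|\by\|_{\Lb^2(\Omega)}\big(\|\beps\|_{\Lb^2(\Omega)}+\|\bg(0)\|_{\Lb^2(\Omega)}\big) \le C_P\|\by\|_{\Hb_0^1(\Omega)}\big(\epsMax|\Omega|^{1/2}+\|\bg(0)\|_{\Lb^2(\Omega)}\big)$ with $C_P$ the Poincar\'e constant. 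Here $\|\beps\|_{\Lb^2(\Omega)}$ is controlled uniformly by $\epsMax$ as noted after \eqref{eq:admissible_set_controls}, and $\|\bg(0)\|_2 = \|\sum_j\alpha_j\bvarphi_j(0)\|_2 \le K\alphaMax\phi_0$ is bounded uniformly in $\balpha\in\sAK$ by \cref{asmpt:basis_elements}-\ref{item:Basis_elements_bounded}. Dividing by $\|\by\|_{\Hb_0^1(\Omega)}$ gives a bound $\|\by\|_{\Hb_0^1(\Omega)}\le R_1$ with $R_1$ depending only on $\Omega$, $\epsMax$, $K$, $\alphaMax$, and the basis.

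\textbf{Step 3: $\sY$-regularity and the final bound.} Since $\by\in\Hb_0^1(\Omega)$ solves $-\Delta\by = \beps - \bg(\by)$ with right-hand side in $\Lb^2(\Omega)$, and $\Omega$ is convex with Lipschitz boundary, elliptic regularity on convex domains \cite{Grisvard2011EllipticProblemsNonsmoothDomains} gives $\by\in\Hb^2(\Omega)\cap\Hb_0^1(\Omega)=\sY$ with $\|\by\|_{\sY}=\|\Delta\by\|_{\Lb^2(\Omega)} = \|\beps-\bg(\by)\|_{\Lb^2(\Omega)} \le \|\beps\|_{\Lb^2(\Omega)}+\|\bg(\by)\|_{\Lb^2(\Omega)}$. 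It remains to bound $\|\bg(\by)\|_{\Lb^2(\Omega)}$ uniformly: from $\bg(\by)=\boldsymbol\Phi(\by)\balpha=\sum_j\alpha_j\bvarphi_j(\by)$ and the pointwise bound $c_{\varphi_j}\le\bvarphi_j(\by(x))\le C_{\varphi_j}$ (\cref{asmpt:basis_elements}-\ref{item:Basis_elements_bounded}), we get $\|\bvarphi_j(\by)\|_{\Lb^2(\Omega)}\le C_\varphi|\Omega|^{1/2}$ and hence $\|\bg(\by)\|_{\Lb^2(\Omega)}\le K\alphaMax C_\varphi|\Omega|^{1/2}$, uniformly in $\balpha\in\sAK$ and independent of $\beps$. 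Setting $C_0 \coloneqq \epsMax|\Omega|^{1/2} + K\alphaMax C_\varphi|\Omega|^{1/2}$ finishes the proof.

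\textbf{Main obstacle.} The analytic heart is verifying the hypotheses for the existence theory and, in particular, ensuring the $\sY$-bound is genuinely \emph{independent of $\beps$ and $\bg$} as claimed: this hinges crucially on \cref{asmpt:basis_elements}-\ref{item:Basis_elements_bounded}, which makes $\bg(\by)$ uniformly bounded in $\Lb^2$ regardless of how large $\by$ is, so that Step 3 does not need the (already uniform) Step 2 bound at all for the nonlinear term. If instead $\bg$ only had polynomial growth, one would have to bootstrap through $\Lb^p$ estimates and the argument would be more delicate; with the boundedness assumption in place, the only nontrivial input is the convex-domain $\Hb^2$-regularity estimate, which is quoted. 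One should double-check that the elliptic regularity constant can be taken equal to $1$ with the chosen norm $\|\cdot\|_{\sY}=\|\Delta\cdot\|_{\Lb^2(\Omega)}$, which is immediate since that norm is defined precisely as $\|\Delta\by\|_{\Lb^2(\Omega)}$ and $\Delta\by=\bg(\by)-\beps$ in $\Lb^2(\Omega)$.
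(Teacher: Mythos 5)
Your proof is correct, and it takes a more self-contained route than the paper, which disposes of the entire statement in two sentences by verifying the hypotheses of \cite[Theorem 2.10]{CasasRoesch2020NonmonotoneSemilinearElliptic} (via \cref{cor:assumptions_CasasRoesch}, the uniform bound $\epsMax$ on $\Ead$, and the boundedness of $\bg$ at the origin) and citing that result for existence, uniqueness and the a priori bound all at once. You instead reconstruct the argument from scratch: Browder--Minty with the strict monotonicity coming from \eqref{eq:ellipticity_bilinearity} plus \cref{cor:assumptions_CasasRoesch}-\ref{item:G_monotone}, coercivity via $\int_\Omega \by\cdot(\bg(\by)-\bg(0))\rmd x\ge 0$, and then convex-domain $\Hb^2$-regularity combined with the uniform $\Lb^2$-bound on $\bg(\by)$ from \cref{asmpt:basis_elements}-\ref{item:Basis_elements_bounded} to get an explicit $C_0$. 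Your closing observation is the right one: because the basis elements are uniformly bounded, $\|\bg(\by)\|_{\Lb^2(\Omega)}\le K\,\alphaMax\,C_\varphi\,|\Omega|^{1/2}$ \emph{regardless} of the size of $\by$, so the $\sY$-bound follows directly from $\|\by\|_{\sY}=\|\beps-\bg(\by)\|_{\Lb^2(\Omega)}$ without any bootstrapping, and the resulting constant is manifestly independent of $\beps\in\Ead$ and of $\balpha\in\sAK$, exactly as claimed. What the paper's route buys is brevity and the additional regularity/continuity information packaged in the cited theorem; what yours buys is transparency about where each hypothesis enters and an explicit constant. The only blemishes are cosmetic: the vector-valued estimates should carry a factor $\sqrt{2}$ (e.g.\ $\|\beps\|_{\Lb^2(\Omega)}\le\sqrt{2}\,\epsMax|\Omega|^{1/2}$ and $\|\bvarphi_j(\by)\|_{\Lb^2(\Omega)}\le\sqrt{2}\,C_\varphi|\Omega|^{1/2}$), which only changes $C_0$ by a harmless multiplicative constant.
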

\begin{proof}
   By the virtue of \cref{cor:assumptions_CasasRoesch}, since $\Omega$ is convex with Lipschitz boundary $\Gamma$, it is possible to apply \cite[Theorem 2.10]{CasasRoesch2020NonmonotoneSemilinearElliptic}.
   Notice in particular that all elements $\beps \in \Ead$ are uniformly bounded by $\epsMax$ and that the value of $\bg$ at the origin is bounded by our assumption on the basis elements (cf. \cref{asmpt:basis_elements}).
\end{proof}

\begin{remark}
    We refer to \cite{Bonnans1999PrincipePontryagin,BrezisBrowder1978NonlinEllipticBVP, Sever2006SystemSemilinElliptic} for related discussions on the analysis of semilinear elliptic PDEs.
\end{remark}

We are now able to introduce the \emph{control-to-state} map $\sS$ for a fixed nonlinearity $\bg_\balpha$ for $\balpha \in \sAK$ as
\begin{align*}
    \sS_\balpha:\Ead \rightarrow \sY,
    \qquad
    \beps \mapsto \by^{\balpha,\beps} \coloneqq \sS_\balpha(\beps),
\end{align*}
where $\sS_\balpha(\beps)$ is the unique weak solution to \eqref{eq:Laplace_Model} provided by \cref{thm:Energy_laplace_solution} given the control $\beps$ and the nonlinearity $\bg_\balpha$.
Therefore, we can conclude that $\sS_\balpha$ is well defined.
Furthermore, we have the following result from \cite[Lemma 3.5]{CasasRoesch2020NonmonotoneSemilinearElliptic}:
\begin{theorem}
    \label{thm:Continuity_control-to-state-map}
    Let \cref{asmpt:basis_elements,asmpt:monotonicity_nonlinearity} hold and fix a nonlinearity $\bg_\balpha$ for $\balpha \in \sAK$.
    Then the control-to-state map $\sS_\balpha$ is Lipschitz continuous from $\Lb^2(\Omega)$ to $\sY$: There exists a constant $L>0$, such that for all $\beps,\widetilde\beps \in \Ead$ it holds
    \begin{align*}
            \|\sS_\balpha(\beps) - \sS_\balpha(\widetilde\beps)\|_{\sY}
        \leq L \, \| \beps - \widetilde\beps \|_{\Lb^2(\Omega)}.
    \end{align*}
\end{theorem}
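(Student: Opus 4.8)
The plan is to derive the Lipschitz estimate directly from the weak formulation \eqref{eq:weak_solution} by subtracting the equations satisfied by $\by=\sS_\balpha(\beps)$ and $\widetilde\by=\sS_\balpha(\widetilde\beps)$, testing with a suitable function, and exploiting the ellipticity \eqref{eq:ellipticity_bilinearity} together with the monotonicity of $\bg_\balpha$ from \cref{cor:assumptions_CasasRoesch}-\ref{item:G_monotone}. First I would write, for all $\bv\in\Hb_0^1(\Omega)$,
\begin{align*}
    a[\by-\widetilde\by,\bv]
    + \int_\Omega \bv(x)\cdot\bigl(\bg_\balpha(\by(x))-\bg_\balpha(\widetilde\by(x))\bigr)\rmd x
    = \int_\Omega \bv(x)\cdot\bigl(\beps(x)-\widetilde\beps(x)\bigr)\rmd x.
\end{align*}
Choosing the test function $\bv=\by-\widetilde\by\in\Hb_0^1(\Omega)$, the monotonicity term is nonnegative, so it can be dropped, and \eqref{eq:ellipticity_bilinearity} gives $\|\by-\widetilde\by\|_{\Hb_0^1(\Omega)}^2 \le \int_\Omega (\by-\widetilde\by)\cdot(\beps-\widetilde\beps)\rmd x$. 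Applying Cauchy–Schwarz on the right and then the Poincaré inequality $\|\by-\widetilde\by\|_{\Lb^2(\Omega)} \le C_P\|\by-\widetilde\by\|_{\Hb_0^1(\Omega)}$ yields an $\Hb_0^1$-bound: $\|\by-\widetilde\by\|_{\Hb_0^1(\Omega)} \le C_P\,\|\beps-\widetilde\beps\|_{\Lb^2(\Omega)}$.

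The remaining and genuinely more delicate step is to upgrade this $\Hb_0^1$-estimate to the $\sY = \Hb^2(\Omega)\cap\Hb_0^1(\Omega)$-norm, i.e. to control $\|\Delta(\by-\widetilde\by)\|_{\Lb^2(\Omega)}$. Here I would use that $\by-\widetilde\by$ solves, in the distributional sense, $-\Delta(\by-\widetilde\by) = (\beps-\widetilde\beps) - \bigl(\bg_\balpha(\by)-\bg_\balpha(\widetilde\by)\bigr)$ with homogeneous Dirichlet data, so that by the equivalence of $\|\cdot\|_{\sY}$ with the $\Hb^2(\Omega)$-norm (elliptic regularity on the convex Lipschitz domain $\Omega$, as quoted after the definition of $\sY$),
\begin{align*}
    \|\by-\widetilde\by\|_{\sY}
    = \|\Delta(\by-\widetilde\by)\|_{\Lb^2(\Omega)}
    \le \|\beps-\widetilde\beps\|_{\Lb^2(\Omega)} + \|\bg_\balpha(\by)-\bg_\balpha(\widetilde\by)\|_{\Lb^2(\Omega)}.
\end{align*}
To bound the last term I would invoke that, by \cref{thm:Energy_laplace_solution}, both $\by$ and $\widetilde\by$ lie in the ball $\{\|\cdot\|_{\sY}\le C_0\}$, hence (via the continuous embedding $\Hb^2(\Omega)\hookrightarrow\Cb(\bar\Omega)$ in two dimensions) their ranges lie in a fixed compact set $\Upsilon\subset\Rb^2$ whose size depends only on $C_0$. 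On that set $\bg_\balpha$ is Lipschitz with constant $L_g \le \alphaMax K L_\varphi$ by \cref{asmpt:basis_elements}-\ref{item:Basis_elements_contdiff} and \eqref{eq:linearcomb_nonlinearity}, so $\|\bg_\balpha(\by)-\bg_\balpha(\widetilde\by)\|_{\Lb^2(\Omega)} \le L_g\|\by-\widetilde\by\|_{\Lb^2(\Omega)} \le L_g C_P\|\by-\widetilde\by\|_{\Hb_0^1(\Omega)}$.

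Combining the two displays with the $\Hb_0^1$-bound from the first step gives
\begin{align*}
    \|\by-\widetilde\by\|_{\sY}
    \le \bigl(1 + L_g C_P^2\bigr)\,\|\beps-\widetilde\beps\|_{\Lb^2(\Omega)},
\end{align*}
which is the claimed estimate with $L = 1 + L_g C_P^2$; crucially $L$ depends only on $\Omega$, $\alphaMax$, $K$ and $L_\varphi$, not on the particular controls. I expect the main obstacle to be making the compactness/boundedness argument for the range of the states fully rigorous — specifically, justifying that one may use a \emph{uniform} Lipschitz constant for $\bg_\balpha$ by confining all states to a common compact set via the a priori bound of \cref{thm:Energy_laplace_solution} and the $\Hb^2\hookrightarrow\Cb(\bar\Omega)$ embedding valid in dimension two. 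Alternatively, since \cref{thm:Continuity_control-to-state-map} is quoted as \cite[Lemma 3.5]{CasasRoesch2020NonmonotoneSemilinearElliptic}, the proof may simply verify that the hypotheses of that lemma are met (the bound of \cref{thm:Energy_laplace_solution}, continuity and monotonicity of $\bg_\balpha$ from \cref{cor:assumptions_CasasRoesch}, ellipticity \eqref{eq:ellipticity_bilinearity}) and cite it, in which case the self-contained argument above serves as the underlying justification.
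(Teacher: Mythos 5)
Your proposal is correct, and it is in fact more than the paper provides: the paper states \cref{thm:Continuity_control-to-state-map} as a direct quotation of \cite[Lemma 3.5]{CasasRoesch2020NonmonotoneSemilinearElliptic} and gives no proof at all, so your closing remark correctly anticipates the paper's actual route. Your self-contained argument is sound and mirrors the techniques the paper itself uses for the parameter-to-state map in \cref{lem:energy_solution,thm:continuity_parameter_to_state}: test the difference of the weak formulations with $\by-\widetilde\by$, discard the nonnegative monotonicity term using \cref{cor:assumptions_CasasRoesch}-\ref{item:G_monotone}, apply Cauchy--Schwarz and Poincar\'e to get the $\Hb_0^1(\Omega)$ bound, and then bootstrap via the pointwise identity $-\Delta(\by-\widetilde\by)=(\beps-\widetilde\beps)-\bigl(\bg_\balpha(\by)-\bg_\balpha(\widetilde\by)\bigr)$. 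Two small simplifications are available: since the paper \emph{defines} $\|\cdot\|_{\sY}=\|\Delta\cdot\|_{\Lb^2(\Omega)}$, the equality in your second display needs no elliptic-regularity input beyond the fact that both states lie in $\sY$ (which \cref{thm:Energy_laplace_solution} already guarantees); and the uniform Lipschitz constant for $\bg_\balpha$ can be read off directly from \cref{asmpt:basis_elements}-\ref{item:Basis_elements_contdiff}, where $L_{\varphi_j}$ is already independent of the base point, so the $\Hb^2(\Omega)\hookrightarrow\Cb(\bar\Omega)$ compactness detour, while correct, is not strictly needed. What your approach buys is an explicit constant $L=1+L_gC_P^2$ and independence from the external reference; what the paper's citation buys is brevity and coverage of the more general (nonmonotone) setting treated in \cite{CasasRoesch2020NonmonotoneSemilinearElliptic}.
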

We refer the reader to \cite[Chapter 4]{Treoltzsch2010OCP_PDE} as well as \cite{CasasRoesch2020NonmonotoneSemilinearElliptic,CasasTroltzschOCPQuasilinearEllipticEQ,CasasWachsmuth2023ExistenceSolutionOCPSemilinPDE} for more details.

\subsection{Stability with respect to the parameters}

In this section, we investigate the wellposedness and Lipschitz continuity (stability) of the parameter-to-state map defined below. 
In the following analysis, we take account of the iterative structure of the algorithm defined in \cref{sec:GreedyAlgorithm}. More specifically, we consider a $k \in [K]$ that indicates the $k$-th iteration of the algorithm in the next results.
We introduce for $1\leq k \leq K-1$ the set 
\begin{align}
    \label{eq:set_Ak}
    \sAk \coloneqq \{ \balpha \in \sAK \, | \, \alpha_j = 0, \; j = k+1,\ldots, K  \}.
\end{align}

From \cref{thm:Energy_laplace_solution} and \cref{asmpt:basis_elements}, we deduce the following proposition.

\begin{proposition}
    Let \cref{asmpt:basis_elements,asmpt:monotonicity_nonlinearity} hold and $\beps \in \Ead$ be chosen arbitrarily. 
    Introduce the parameter-to-state map $\sP_\beps$ by
    \begin{align*}
        \sP_\beps:\sAK\to\sY,
        \quad
        \balpha\mapsto\by^{\balpha,\beps}
        \coloneqq \sP_\beps(\balpha),
    \end{align*}
    where $\sP_\beps(\balpha)$ is the solution in the sense of \cref{def:weak_solution} given the nonlinearity $\bg_\balpha$ and the control $\beps$. 
    Then, $\sP_\beps$ is well-defined.
    \label{prop:paramter-to-state}
\end{proposition}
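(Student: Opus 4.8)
The plan is to reduce the statement to \cref{thm:Energy_laplace_solution}: once we know that for every $\balpha\in\sAK$ the nonlinearity $\bg_\balpha=\sum_{j=1}^K\alpha_j\bvarphi_j$ falls within the scope of \cref{asmpt:basis_elements,asmpt:monotonicity_nonlinearity}, that theorem provides, for the given control $\beps\in\Ead$, a \emph{unique} weak solution $\by^{\balpha,\beps}\in\sY$ of \eqref{eq:Laplace_Model} in the sense of \cref{def:weak_solution}. Hence the assignment $\balpha\mapsto\by^{\balpha,\beps}$ is genuinely single-valued and takes values in $\sY$, which is exactly the assertion that $\sP_\beps$ is well-defined. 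Since \cref{asmpt:basis_elements} is a hypothesis on the fixed basis $\mathcal G$ and is therefore already in force, the only thing left to check is that each $\bg_\balpha$ with $\balpha\in\sAK$ satisfies \cref{asmpt:monotonicity_nonlinearity}.

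For that step I would first note that $\bg_\balpha$ is a \emph{finite} linear combination of the continuously differentiable functions $\bvarphi_j$ (by \cref{asmpt:basis_elements}-\ref{item:Basis_elements_contdiff}), so $\bg_\balpha\in C^1(\Rb^2;\Rb^2)$ with Jacobian $\nabla\bg_\balpha(y)=\sum_{j=1}^K\alpha_j\nabla\bvarphi_j(y)$. Next I would verify the pointwise positive semi-definiteness. Fix $y,v\in\Rb^2$ and $j\in[K]$, and apply the monotonicity inequality of \cref{asmpt:basis_elements}-\ref{item:monotonicity_phi} to the pair $y+tv$ and $y$, which gives $t\,\bigl(\bvarphi_j(y+tv)-\bvarphi_j(y)\bigr)\cdot v\ge 0$, hence $\bigl(\bvarphi_j(y+tv)-\bvarphi_j(y)\bigr)\cdot v\ge 0$ for $t>0$; dividing by $t$ and letting $t\to0^+$ yields $v^\top\nabla\bvarphi_j(y)\,v\ge 0$. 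Because $\balpha\in\sAK$ forces $\alpha_j\ge 0$ for all $j$, summing with these nonnegative weights preserves the sign,
\[
    v^\top\nabla\bg_\balpha(y)\,v=\sum_{j=1}^K\alpha_j\,v^\top\nabla\bvarphi_j(y)\,v\ge 0,
\]
so $\nabla\bg_\balpha(y)$ is positive semi-definite for every $y$ and \cref{asmpt:monotonicity_nonlinearity} holds. (Alternatively one could stay at the level of operators and invoke that a nonnegative combination of monotone operators is monotone, as in the proof of \cref{cor:assumptions_CasasRoesch}; but since \cref{thm:Energy_laplace_solution} is phrased through the Jacobian, I would make the differentiation step explicit as above.)

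I do not expect a genuine obstacle: the proposition is essentially a bookkeeping check that the whole family $\{\bg_\balpha\}_{\balpha\in\sAK}$ is admissible for \cref{thm:Energy_laplace_solution}. The only mildly delicate point worth writing out carefully is the transition from the pointwise monotonicity of the basis elements to positive semi-definiteness of their Jacobians — i.e., not silently invoking the $C^1$ characterization of monotone maps — and making sure one uses the sign constraint $\alpha_j\ge 0$ built into the definition of $\sAK$, which is precisely what lets the linear combination inherit the property. It is also worth recording (although not needed for well-definedness itself) that the bound $C_0$ from \cref{thm:Energy_laplace_solution} does not depend on $\balpha$, since it is independent of $\bg$; this uniformity will be convenient in the subsequent stability analysis.
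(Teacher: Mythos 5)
Your proposal is correct and follows the same route as the paper, which simply deduces the proposition from \cref{thm:Energy_laplace_solution} together with \cref{asmpt:basis_elements} without writing out further details. Your explicit verification that each $\bg_\balpha$ with $\balpha\in\sAK$ inherits \cref{asmpt:monotonicity_nonlinearity} (via the nonnegativity of the $\alpha_j$ and the passage from the monotonicity inequality to positive semi-definiteness of the Jacobian) is exactly the bookkeeping the paper leaves implicit.
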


Since we are interested in the dependence of the solution with respect to the parameters $\balpha$, we assume in the following that the control $\beps \in \Ead$ is fixed and therefore omit to write $\beps$.

We recall the weak formulation \eqref{eq:weak_solution} for our structure of the nonlinearity given in \eqref{eq:linearcomb_nonlinearity} and $\balpha \in \sAK$
\begin{align}
    \label{eq:weak_formulation_specific}
    a[\by^\balpha,\bv]+\int_\Omega  \bv(x) \cdot \bigg(\sum_{j=1}^K \alpha_j \, \bvarphi_j(\by^{\balpha}(x))\bigg)\rmd x=\int_{\Omega}\beps(x)\cdot\bv(x)\rmd x\quad\forall \bv \in \Hb^1_0(\Omega).
\end{align}
Notice that the difference $\bar{\by} \coloneqq \by^{\balpha^1} - \by^{\balpha^2}$ of the solutions $\by^{\balpha^1}$ and $\by^{\balpha^2}$ for any $\balpha^1,\balpha^2 \in \sAK$ fulfills the following variational equality
\begin{align}
	a[\bar\by,\bv] +  \int_{\Omega} \bv(x) \cdot
	\left( \sum_{j=1}^K \left(
	\alpha^1_j \, \bvarphi_j\big(\by^{\balpha^1}(x)\big)
	- \alpha^2_j \, \bvarphi_j\big(\by^{\balpha^2}(x)\big)
	\right)\right) \rmd x
	=0\quad
	\forall \bv \in \Hb^1_0(\Omega).
    \label{eq:weak_formulation_difference}
\end{align}
If we choose $\bv=\bar{\by} \in \Hb^1_0(\Omega)$ we obtain
\begin{align}
	a[\bar\by,\bar\by]+\int_{\Omega}\bar{\by}(x) \cdot\,\bigg( \sum_{j=1}^K\Big(\alpha^1_j \, \bvarphi_j\big(\by^{\balpha^1}(x)\big)-\alpha^2_j\,\bvarphi_j\big(\by^{\balpha^2}(x)\big)\Big)\bigg)\rmd x=0.
	\label{eq:diff_solutions_alpha}
\end{align}

We derive the following energy estimate for $\bar{\by}$.
\begin{lemma}
	\label{lem:energy_solution}
	Let \cref{asmpt:basis_elements,asmpt:monotonicity_nonlinearity} hold and $k \in [K]$. Let $\balpha^1,\balpha^2 \in \sAk$. For the difference $\bar{\by}= \by^{\balpha^1} - \by^{\balpha^2}$ of two corresponding solutions it holds that
	\begin{align*}
		{\|\by^{\balpha^1} - \by^{\balpha^2}\|}_{\Hb_0^1(\Omega)}
        \leq 
        C \,k\,{\|\balpha^1-\balpha^2\|}_\infty,
	\end{align*}
    with a constant $C>0$ independent of $\bg_{\balpha^1},\bg_{\balpha^2},\balpha^1$ and $\balpha^2$.
\end{lemma}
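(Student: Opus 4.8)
The plan is to start from the variational identity \eqref{eq:diff_solutions_alpha}, use the ellipticity \eqref{eq:ellipticity_bilinearity} to replace $a[\bar\by,\bar\by]$ by $\|\bar\by\|_{\Hb^1_0(\Omega)}^2$, and then estimate the nonlinear term from below. The key trick is the standard splitting of the mismatch term: for each $j$ write
\[
\alpha_j^1\bvarphi_j(\by^{\balpha^1}) - \alpha_j^2\bvarphi_j(\by^{\balpha^2})
= \alpha_j^1\bigl(\bvarphi_j(\by^{\balpha^1}) - \bvarphi_j(\by^{\balpha^2})\bigr)
+ (\alpha_j^1 - \alpha_j^2)\,\bvarphi_j(\by^{\balpha^2}).
\]
Multiplying by $\bar\by$ and integrating, the first group of terms, $\sum_j \alpha_j^1\int_\Omega \bar\by(x)\cdot\bigl(\bvarphi_j(\by^{\balpha^1}(x)) - \bvarphi_j(\by^{\balpha^2}(x))\bigr)\rmd x$, is nonnegative: each $\alpha_j^1\ge 0$ since $\balpha^1\in\sAk\subset\sAK$, and each integrand is pointwise nonnegative by the monotonicity of $\bvarphi_j$ (\cref{asmpt:basis_elements}-\ref{item:monotonicity_phi}). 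Hence this term can be dropped after moving it to the left-hand side, which is exactly where the monotonicity hypotheses earn their keep.

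**Next I would** bound what remains. After the above manipulation \eqref{eq:diff_solutions_alpha} yields
\[
\|\bar\by\|_{\Hb^1_0(\Omega)}^2
\le \Bigl| \sum_{j=1}^k (\alpha_j^1 - \alpha_j^2)\int_\Omega \bar\by(x)\cdot\bvarphi_j(\by^{\balpha^2}(x))\rmd x \Bigr|,
\]
where the sum runs only to $k$ because $\balpha^1,\balpha^2\in\sAk$ forces $\alpha_j^1 = \alpha_j^2 = 0$ for $j>k$. By Cauchy–Schwarz in $\Lb^2(\Omega)$ each summand is at most $|\alpha_j^1-\alpha_j^2|\,\|\bar\by\|_{\Lb^2(\Omega)}\,\|\bvarphi_j(\by^{\balpha^2})\|_{\Lb^2(\Omega)}$. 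The factor $\|\bvarphi_j(\by^{\balpha^2})\|_{\Lb^2(\Omega)}$ is bounded by a fixed constant $I_\varphi$ via \eqref{eq:integral_constant_stability}: indeed $\by^{\balpha^2}\in\sY$ with $\|\by^{\balpha^2}\|_{\sY}\le C_0$ by \cref{thm:Energy_laplace_solution}, so $\by^{\balpha^2}$ ranges over a bounded subset of $\Rb^2$ (using the continuous embedding $\sY\hookrightarrow\Cb(\bar\Omega)$ in two dimensions, or more simply the $\Lb^2(\Omega)$ bound together with the boundedness of $\bvarphi_j$ from \cref{asmpt:basis_elements}-\ref{item:Basis_elements_bounded}, which gives $\|\bvarphi_j(\by^{\balpha^2})\|_{\Lb^2(\Omega)}\le \sqrt{2}\,C_\varphi\,|\Omega|^{1/2}$ directly and uniformly). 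Bounding $|\alpha_j^1-\alpha_j^2|\le\|\balpha^1-\balpha^2\|_\infty$ and summing over $j\in[k]$ gives
\[
\|\bar\by\|_{\Hb^1_0(\Omega)}^2 \le k\, I_\varphi\, \|\balpha^1-\balpha^2\|_\infty\, \|\bar\by\|_{\Lb^2(\Omega)}.
\]

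**Finally**, I would absorb the $\|\bar\by\|_{\Lb^2(\Omega)}$ on the right. Using the Poincaré inequality $\|\bar\by\|_{\Lb^2(\Omega)}\le C_P\|\bar\by\|_{\Hb^1_0(\Omega)}$ (valid since $\bar\by\in\Hb^1_0(\Omega)$), the inequality becomes $\|\bar\by\|_{\Hb^1_0(\Omega)}^2 \le C_P\, k\, I_\varphi\,\|\balpha^1-\balpha^2\|_\infty\,\|\bar\by\|_{\Hb^1_0(\Omega)}$, and dividing by $\|\bar\by\|_{\Hb^1_0(\Omega)}$ (the estimate being trivial when $\bar\by=0$) yields
\[
\|\by^{\balpha^1}-\by^{\balpha^2}\|_{\Hb^1_0(\Omega)} \le C\, k\,\|\balpha^1-\balpha^2\|_\infty
\]
with $C\coloneqq C_P\, I_\varphi$, which depends only on $\Omega$ and the basis $\cG$, not on $\balpha^1,\balpha^2$ or the nonlinearities. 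I do not expect a genuine obstacle here; the only point requiring a little care is justifying the uniform bound on $\|\bvarphi_j(\by^{\balpha^2})\|_{\Lb^2(\Omega)}$ — one must make sure the bounding constant is independent of $\balpha^2$, which it is because \cref{thm:Energy_laplace_solution} gives a $\balpha$-independent bound $C_0$ on the state (or, even more cheaply, because \cref{asmpt:basis_elements}-\ref{item:Basis_elements_bounded} bounds $\bvarphi_j$ globally on all of $\Rb^2$). The linear-in-$k$ growth is simply the crude count of $k$ nonzero coefficients in $\sAk$.
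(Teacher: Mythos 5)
Your proposal is correct and follows essentially the same route as the paper's proof: the same add-and-subtract splitting of the nonlinear term, dropping the sign-definite monotonicity part (using $\alpha_j^1\ge 0$ and \cref{asmpt:basis_elements}-\ref{item:monotonicity_phi}), then Cauchy--Schwarz, the uniform bound \eqref{eq:integral_constant_stability}, Poincar\'e, and division by $\|\bar\by\|_{\Hb_0^1(\Omega)}$. Your remark that the uniform bound on $\|\bvarphi_j(\by^{\balpha^2})\|_{\Lb^2(\Omega)}$ follows most cheaply from the global boundedness of the basis elements is a small but welcome clarification of a point the paper leaves implicit.
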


\begin{proof}
    The claim is true if $\bar\by\coloneqq\by^{\balpha^1} - \by^{\balpha^2} \equiv 0 \in \Hb^1_0(\Omega)$.
    Hence we now assume that $\bar\by\in \Hb^1_0(\Omega)\setminus\{0\}$. From \eqref{eq:diff_solutions_alpha}, we obtain
    \begin{subequations}
        \begin{align}
            &\|\bar\by\|_{\Hb^1_0(\Omega)}^2 \stackrel{\eqref{eq:ellipticity_bilinearity}}{=} a[\bar\by,\bar\by]=
             \sum_{j=1}^K  \int_{\Omega} \bar{\by}(x)   \cdot \left(
        	- \alpha^1_j \, \bvarphi_j\big(\by^{\balpha^1}(x)\big)
        	+ \alpha^2_j \, \bvarphi_j\big(\by^{\balpha^2}(x)\big)
        	\right) \rmd x
            \label{eq:positivity_norm}
            \\
			\nonumber&=
			\sum_{j=1}^k \int_\Omega   \bar{\by}(x) \cdot
            \left[
			-\alpha_j^1\bvarphi_j\big(\by^{\balpha^1}(x)\big) + \alpha_j^2 \bvarphi_j\big(\by^{\balpha^2}(x)\big)+\alpha^1_j \bvarphi_j\big(\by^{\balpha^2}(x)\big)-\alpha^1_j \bvarphi_j\big(\by^{\balpha^2}(x)\big)\right] \rmd x
            \\
			&=\int_\Omega 
			\underbrace{-\sum_{j=1}^k\alpha_j^1 \left(\bvarphi_j\big(\by^{\balpha^1}(x)\big) - \bvarphi_j\big(\by^{\balpha^2}(x)\big)\right) \cdot\bar{\by}(x)}_{\leq 0} 
			+ \sum_{j=1}^k\left( \alpha_j^2 - \alpha^1_j \right) \bvarphi_j\big(\by^{\balpha^2}(x)\big)
            \cdot\bar{\by}(x) \rmd x
            \label{eq:H2_estimate}
            \\
            &\leq
             \sum_{j=1}^k \int_\Omega
            \left( \alpha_j^2 - \alpha^1_j \right)\bvarphi_j\big(\by^{\balpha^2}(x)\big)\cdot\bar{\by}(x) \rmd x
            \label{eq:apply_monotonicity}
            \\
			&\le
			\sum_{j=1}^k 
			\int_\Omega \left| 
            \left( \alpha_j^2 - \alpha^1_j\right) \, \bar{\by}(x) \cdot \bvarphi_j\big(\by^{\balpha^2}(x)\big) 
			\right| \rmd x
            \label{eq:triangly}
			\\
			&\le
			\| \bar{\by} \|_{\Lb^2(\Omega)} \, \|\balpha^2-\balpha^1\|_\infty  \,
            \sum_{j=1}^k  \|\bvarphi_j\big(\by^{\balpha^2}\big) \|_{\Lb^2(\Omega)}
			\label{eq:MultHoelder}
			\\
			&\leq
			k \, I_\varphi  \,
			\|\balpha^1 - \balpha^2 \|_\infty \, \| \bar{\by}\|_{\Lb^2(\Omega)}
            \leq
			k \, I_\varphi \, C_\Omega \,
			\|\balpha^1 - \balpha^2 \|_\infty \, \| \bar{\by}\|_{\Hb^1_0(\Omega)}. 
			\label{eq:estimate_nonlinearity}
		\end{align}
        \label{eq:continuity_parameterToState}
	\end{subequations}
    In \eqref{eq:continuity_parameterToState}, we use the following: In \eqref{eq:apply_monotonicity}, we apply the monotonicity of the nonlinearity provided by \cref{asmpt:basis_elements}-\ref{item:monotonicity_phi}, i.e. that we have $\left(\bvarphi_j(y) - \bvarphi_j(\widetilde y)\right) \cdot\left( y-\widetilde y \right) \geq 0$ for $y,\widetilde y \in \Rb^2$ and $j\in[K]$, 
    in \eqref{eq:triangly}, we use the triangle inequality, in \eqref{eq:MultHoelder} we use the general version of the Hölder inequality (cf. \cite[Lemma 1.18]{Alt2012LineareFunktional}), and in \eqref{eq:estimate_nonlinearity}, we use \eqref{eq:integral_constant_stability} and the Poincar\'e inequality from which we get the constant $C_\Omega>0$.
    Summarizing, we have
    \begin{align*}
        {\|\bar\by\|}_{\Hb^1_0(\Omega)}^2
        \le 
        k\,I_\varphi\,C_\Omega\,{\|\balpha^1-\balpha^2\|}_\infty\,{\| \bar{\by}\|}_{\Hb_0^1(\Omega)},
    \end{align*}
	which leads to
	\begin{align*}
		{\|\bar{\by}\|}_{\Hb^1_0(\Omega)}\le C\,k\,{\|\balpha^1-\balpha^2\|}_\infty
	\end{align*}
    with $C\coloneqq I_\varphi\,C_\Omega$.
\end{proof}

The result of \cref{lem:energy_solution} can be further improved in the sense that we can obtain an estimate in the $\sY$ norm.  From this, the continuity of the parameter-to-state map $\sP_\beps$ follows.
\begin{theorem}
    \label{thm:continuity_parameter_to_state}
    Let \cref{asmpt:basis_elements,asmpt:monotonicity_nonlinearity} hold and $k \in [K]$. Let $\balpha^1,\balpha^2 \in \sAk$. 
    For the difference $\by^{\balpha^1} - \by^{\balpha^2}$ of two corresponding solutions it holds that
	\begin{align*}
		{\|\by^{\balpha^1} - \by^{\balpha^2}\|}_{\sY}
        \leq
        C \, k^2 \,{\|\balpha^1-\balpha^2\|}_\infty
	\end{align*}
    for a constant $C >0$ independent of $\bg_{\balpha^1},\bg_{\balpha^2},\balpha^1$ and $\balpha^2$.
\end{theorem}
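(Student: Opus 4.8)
The plan is to upgrade the $\Hb^1_0$-estimate of \cref{lem:energy_solution} to the $\sY$-norm by exploiting the equation satisfied by $\bar\by = \by^{\balpha^1}-\by^{\balpha^2}$. Since $\|\bar\by\|_{\sY} = \|\Delta\bar\by\|_{\Lb^2(\Omega)}$ and $\sY = \Hb^2(\Omega)\cap\Hb^1_0(\Omega)$ with $\Omega$ convex, it suffices to control $\|\Delta\bar\by\|_{\Lb^2(\Omega)}$. From the weak formulations for $\by^{\balpha^1}$ and $\by^{\balpha^2}$, the difference $\bar\by$ solves, in the weak sense,
\begin{align*}
    -\Delta\bar\by(x)
    = -\sum_{j=1}^k\Big(\alpha_j^1\bvarphi_j\big(\by^{\balpha^1}(x)\big) - \alpha_j^2\bvarphi_j\big(\by^{\balpha^2}(x)\big)\Big)
    \qquad\text{in }\Omega,
\end{align*}
with homogeneous Dirichlet data (only indices $1,\dots,k$ appear because $\balpha^1,\balpha^2\in\sAk$). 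So I would estimate the $\Lb^2(\Omega)$-norm of the right-hand side and invoke elliptic regularity for the Laplacian on a convex domain (the $\|\cdot\|_{\sY}$-norm is equivalent to the full $\Hb^2$-norm, as already noted in the excerpt citing \cite[Chapter 3]{Grisvard2011EllipticProblemsNonsmoothDomains}).

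The key step is to bound $\big\|\sum_{j=1}^k(\alpha_j^1\bvarphi_j(\by^{\balpha^1}) - \alpha_j^2\bvarphi_j(\by^{\balpha^2}))\big\|_{\Lb^2(\Omega)}$. I would split each summand as
\begin{align*}
    \alpha_j^1\bvarphi_j(\by^{\balpha^1}) - \alpha_j^2\bvarphi_j(\by^{\balpha^2})
    = \alpha_j^1\big(\bvarphi_j(\by^{\balpha^1}) - \bvarphi_j(\by^{\balpha^2})\big)
      + (\alpha_j^1-\alpha_j^2)\,\bvarphi_j(\by^{\balpha^2}).
\end{align*}
For the second group of terms I use $|\alpha_j^1-\alpha_j^2|\le\|\balpha^1-\balpha^2\|_\infty$ together with \eqref{eq:integral_constant_stability} (the uniform $\Lb^2(\Omega)$-bound $I_\varphi$ on $\bvarphi_j$ evaluated at states, which are bounded in $\sY\hookrightarrow\Cb(\bar\Omega)$ by \cref{thm:Energy_laplace_solution}), giving a contribution of order $k\,I_\varphi\,\|\balpha^1-\balpha^2\|_\infty$. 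For the first group I use $0\le\alpha_j^1\le\alphaMax$ and the Lipschitz continuity of $\bvarphi_j$ from \cref{asmpt:basis_elements}-\ref{item:Basis_elements_contdiff} — here one must be slightly careful since that assumption is stated \emph{locally}; however, $\by^{\balpha^1}$ and $\by^{\balpha^2}$ both lie in a fixed bounded subset of $\Cb(\bar\Omega)$ by \cref{thm:Energy_laplace_solution} (with $C_0$ independent of $\balpha,\beps$), so on that compact range $\bvarphi_j$ is globally Lipschitz with constant $\le L_\varphi$. This yields $\|\bvarphi_j(\by^{\balpha^1}) - \bvarphi_j(\by^{\balpha^2})\|_{\Lb^2(\Omega)}\le L_\varphi\|\bar\by\|_{\Lb^2(\Omega)}$, and summing over $j$ gives a term of order $k\,\alphaMax\,L_\varphi\,\|\bar\by\|_{\Lb^2(\Omega)}$.

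Now I insert the $\Hb^1_0$-estimate from \cref{lem:energy_solution}, namely $\|\bar\by\|_{\Lb^2(\Omega)}\le C_\Omega\|\bar\by\|_{\Hb^1_0(\Omega)}\le C_\Omega\,C\,k\,\|\balpha^1-\balpha^2\|_\infty$ (Poincaré plus \cref{lem:energy_solution}). Thus the first group contributes order $k\cdot k\,\|\balpha^1-\balpha^2\|_\infty = k^2\,\|\balpha^1-\balpha^2\|_\infty$, while the second contributes only order $k\,\|\balpha^1-\balpha^2\|_\infty$; the dominant power is $k^2$. Combining, $\|\Delta\bar\by\|_{\Lb^2(\Omega)}\lesssim k^2\,\|\balpha^1-\balpha^2\|_\infty$, and elliptic regularity on the convex domain $\Omega$ converts this into $\|\bar\by\|_{\sY}\le C\,k^2\,\|\balpha^1-\balpha^2\|_\infty$ with $C$ independent of the nonlinearities and the parameters.

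The main obstacle is the local-versus-global nature of the Lipschitz hypothesis: the estimate needs a uniform Lipschitz constant for each $\bvarphi_j$ on the range of the states, which has to be justified via the uniform a priori bound $\|\by^{\balpha,\beps}\|_{\sY}\le C_0$ from \cref{thm:Energy_laplace_solution} and the embedding $\Hb^2(\Omega)\hookrightarrow\Cb(\bar\Omega)$ in two dimensions; everything else is bookkeeping of constants through the triangle and Hölder inequalities exactly as in the proof of \cref{lem:energy_solution}.
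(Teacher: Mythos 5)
Your proposal is correct and follows essentially the same route as the paper: write the elliptic equation satisfied by $\bar\by=\by^{\balpha^1}-\by^{\balpha^2}$, split each summand as $\alpha_j^1\bigl(\bvarphi_j(\by^{\balpha^1})-\bvarphi_j(\by^{\balpha^2})\bigr)+(\alpha_j^1-\alpha_j^2)\bvarphi_j(\by^{\balpha^2})$, bound the right-hand side in $\Lb^2(\Omega)$ via the Lipschitz property, the uniform bound from \cref{thm:Energy_laplace_solution} and \eqref{eq:integral_constant_stability}, and then insert \cref{lem:energy_solution} to produce the $k^2$ factor. Your explicit justification of the uniform Lipschitz constant on the range of the states (via $\|\by^{\balpha,\beps}\|_{\sY}\le C_0$ and the embedding into $\Cb(\bar\Omega)$) is a point the paper passes over silently, so this is a welcome addition rather than a deviation.
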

\begin{proof}
    Let us define $\bar{\by}= \by^{\balpha^1} - \by^{\balpha^2}$.
    The statement of the theorem is true if $\bar\by \equiv 0 \in \sY$.
    Hence we now assume that $\bar\by \not\equiv 0 \in \sY$.
    We know from \cref{thm:Energy_laplace_solution} that $\bar\by \in \sY$ since it is the sum of two functions in $\sY$.
    Hence it fulfills the following equation (cf. \eqref{eq:H2_estimate}) with homogeneous Dirichlet boundary conditions:
    \begin{align}
        -\Delta \bar\by + \sum_{j=1}^k \int_\Omega 
    	\bigg(\alpha_j^1 \left(\bvarphi_j\big(\by^{\balpha^1}(x)\big) - \bvarphi_j\big(\by^{\balpha^2}(x)\big)\right) + 
        \left( \alpha_j^2 - \alpha^1_j \right)\bvarphi_j\big(\by^{\balpha^2}(x)\big)\bigg) \cdot\bar{\by}(x) \rmd x = 0.
       \label{eq:difference_solution}
    \end{align}
    We now take the $\Lb^2$ inner product of \eqref{eq:difference_solution} with $\Delta \bar\by$ and obtain
    \begin{align*}
        \|\Delta \bar\by\|_{\Lb^2(\Omega)}^2 \leq
        k \, 
        \left(\, \|\balpha^1\|_\infty \, L_\varphi \, \|\bar\by\|_{\Lb^2(\Omega)}
        +
        C_\varphi \, \|\balpha^1 - \balpha^2\|_{\infty}
        \right)\,\|\bar\by\|_{\Lb^2(\Omega)}\,\|\Delta \bar\by\|_{\Lb^2(\Omega)},
    \end{align*}
    where we have used the Lipschitz continuity of the basis elements and the Cauchy-Schwarz inequality.
    Now we divide by $\|\Delta\bar\by\|_{\Lb^2(\Omega)}$, apply the Poincar\'e inequality, which gives us the factor $C_\Omega>0$, and use the boundedness of the solutions provided by \cref{thm:Energy_laplace_solution} to obtain
    \begin{align*}
        \|\Delta \bar\by\|_{\Lb^2(\Omega)} \leq
        k \,  \alphaMax\, 
        \left( L_\varphi \,  \, 2\,C_0 + C_\varphi
        \right) C_\Omega \,\|\bar\by\|_{\Hb^1_0}.
    \end{align*}
    Now we can apply \cref{lem:energy_solution} to estimate
    \begin{align*}
        \|\Delta \bar\by\|_{\Lb^2(\Omega)} \leq
        k^2 \, \alphaMax\, 
        \left( L_\varphi \, 2\,C_0 + C_\varphi
        \right)  \,
        I_\varphi \, C_{\Omega}^2 \, {\|\balpha^1-\balpha^2\|}_\infty.
    \end{align*}
    The statement follows by setting $C \coloneqq \alphaMax\, 
        \left( L_\varphi \, 2\,C_0 + C_\varphi
        \right)  \,
        I_\varphi \, C_{\Omega}^2$.
\end{proof}

\subsection{Wellposedness of inverse-map}

This section is devoted to the stability analysis of the parameters with respect to small perturbations in the solution.
Let us again fix the control $\beps\in\Ead$. 
We are interested in the stability estimate of the form
\begin{align*}
	{\|\balpha^1-\balpha^2\|}_\infty \leq C_{\mathrm{inv}}\,{\|\by^{\balpha^1}-\by^{\balpha^2}\|}_{\sY}.
\end{align*}
This inequality implies the Lipschitz continuity of the inverse parameter-to-state map $\sP^{-1}_\beps$, i.e., the state-to-parameter map given for fixed controls $\beps \in \Ead$ as
\begin{align*}
    \sP^{-1}_\beps:\sB \to \sAK,
    \quad\by^\balpha \mapsto \balpha = \sP^{-1}_\beps(\by^\balpha),
\end{align*}
where we introduced the compact set $\mathscr B \coloneqq \sP(\sAK)\subset\sY$.
Therefore, if the difference of two solutions is small in the $\sY$ norm, then the corresponding parameters will be close with respect to the maximum norm (which is equivalent to all other norms in $\Rb^K$). 
This is the content of \cref{thm:stability_inverse_orig}.
Notice that an essential assumption is that the basis elements are bounded away from zero.
\begin{theorem}
	\label{thm:stability_inverse_orig}
	Suppose that \cref{asmpt:basis_elements} and  $k\in[K]$ hold.
	Furthermore, let $\balpha^1,\balpha^2 \in \sAk$.
    Then there exists a constant $C_{\mathrm{inv}}>0$ independent of $\balpha^1,\balpha^2$ such that
	\begin{align*}
		{\|\balpha^1-\balpha^2\|}_\infty\leq C_{\mathrm{inv}}\,{\|\by^{\balpha^1}-\by^{\balpha^2}\|}_{\sY}.
	\end{align*}
	Moreover, the constant has the form
	\begin{align*}
		C_{\mathrm{inv}} \coloneqq \sqrt{2 \widetilde{c}} \,   (1+ \,k  \, L_\varphi \, \alphaMax \, C_{\Omega})c_\varphi^{-1}
	\end{align*}
    for constants $\widetilde{c},C_\Omega>0$.
\end{theorem}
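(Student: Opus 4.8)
The plan is to start from the weak formulation \eqref{eq:weak_formulation_difference} for the difference $\bar\by=\by^{\balpha^1}-\by^{\balpha^2}$ and test it with a cleverly chosen $\bv$. Since we want to recover information about $\balpha^1-\balpha^2$, the natural idea is to insert $\bv$ equal to $\bar\by$ (or a related function) and, crucially, to add and subtract the term $\sum_j\alpha_j^1\bvarphi_j(\by^{\balpha^2})$ exactly as in the proof of \cref{lem:energy_solution}. This splits the nonlinear term into a ``monotone'' piece $\sum_{j=1}^k\alpha_j^1(\bvarphi_j(\by^{\balpha^1})-\bvarphi_j(\by^{\balpha^2}))$, which has the right sign, and a ``linear in $\balpha$'' piece $\sum_{j=1}^k(\alpha_j^1-\alpha_j^2)\bvarphi_j(\by^{\balpha^2})$. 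The key structural fact I would exploit is that $\bvarphi_j(\by^{\balpha^2})$ is bounded \emph{below} componentwise by the strictly positive vector $c_{\varphi_j}$ (this is precisely why \cref{asmpt:basis_elements}-\ref{item:Basis_elements_bounded} asks for $c_{\varphi_j}>0$, as the remark after that assumption flags). So the second piece cannot be small unless $\balpha^1-\balpha^2$ is small.

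The main step is therefore: from \eqref{eq:weak_formulation_difference}, rearrange to obtain
\begin{align*}
 \int_\Omega \bv(x)\cdot\sum_{j=1}^k(\alpha_j^1-\alpha_j^2)\,\bvarphi_j(\by^{\balpha^2}(x))\rmd x
 = -a[\bar\by,\bv] - \int_\Omega \bv(x)\cdot\sum_{j=1}^k\alpha_j^1\big(\bvarphi_j(\by^{\balpha^1}(x))-\bvarphi_j(\by^{\balpha^2}(x))\big)\rmd x.
\end{align*}
On the right-hand side, $a[\bar\by,\bv]$ is bounded by $\|\bar\by\|_{\Hb^1_0}\|\bv\|_{\Hb^1_0}$ and hence (after an integration-by-parts / the fact that $\bar\by\in\sY$) by $\|\bar\by\|_{\sY}\|\bv\|$ in a suitable norm, while the monotone-difference term is controlled via the Lipschitz bound $\|\bvarphi_j(\by^{\balpha^1})-\bvarphi_j(\by^{\balpha^2})\|_2\le L_\varphi\|\bar\by\|_2$, the bound $\alpha_j^1\le\alphaMax$, and the Poincar\'e inequality; together these give a right-hand side of order $k\big(1+k\,L_\varphi\,\alphaMax\,C_\Omega\big)\|\bar\by\|_{\sY}\|\bv\|$ — this is where the factor $(1+k\,L_\varphi\,\alphaMax\,C_\Omega)$ in $C_{\mathrm{inv}}$ comes from. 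For the left-hand side, I would choose $\bv$ to be (a component version of) the constant-sign function that makes $\bv\cdot\bvarphi_j(\by^{\balpha^2})\ge c_\varphi|\bv|$, e.g. test separately with $\bv=(\mathrm{sgn}(\alpha_i^1-\alpha_i^2)\,w,\,0)^\top$ and $\bv=(0,\,\mathrm{sgn}(\alpha_i^1-\alpha_i^2)\,w)^\top$ for a fixed nonnegative $w\in H^1_0(\Omega)$ with $\|w\|_{\Lb^2(\Omega)}$ normalized, using monotonicity to discard cross terms with $j\ne i$ or summing and using that all $\alpha_j\ge 0$; the lower bound $\bvarphi_j\ge c_{\varphi_j}$ then yields a lower bound of the form $c_\varphi\,|\alpha_i^1-\alpha_i^2|\cdot(\text{something positive})$ on the left. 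Taking the maximum over $i\in[k]$ produces $\|\balpha^1-\balpha^2\|_\infty$ on the left, and rearranging gives the claimed inequality with $C_{\mathrm{inv}}=\sqrt{2\widetilde c}\,(1+k\,L_\varphi\,\alphaMax\,C_\Omega)c_\varphi^{-1}$, the constant $\widetilde c$ and the $\sqrt 2$ accounting for the passage between the $\Hb^1_0$/$\sY$ norms and the embedding/Poincar\'e constants and the two-component structure.

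The delicate point — and the main obstacle — is the lower bound on the left-hand side: one needs to extract $\|\balpha^1-\balpha^2\|_\infty$ from $\int_\Omega \bv\cdot\sum_j(\alpha_j^1-\alpha_j^2)\bvarphi_j(\by^{\balpha^2})\rmd x$ despite the fact that different $\alpha_j^1-\alpha_j^2$ may have different signs, so a single test function cannot simply ``see'' the maximum component. The clean way around this is to fix the index $i$ realizing $\|\balpha^1-\balpha^2\|_\infty=|\alpha_i^1-\alpha_i^2|$, test with a function aligned with $\mathrm{sgn}(\alpha_i^1-\alpha_i^2)$ times a positive profile, and use the componentwise positivity $\bvarphi_j\ge c_{\varphi_j}>0$ together with $\alpha_j^1,\alpha_j^2\ge 0$ to ensure that the contributions of the indices $j\ne i$ do not have the wrong sign — i.e. the only way they could hurt is if $(\alpha_j^1-\alpha_j^2)$ had the opposite sign to $(\alpha_i^1-\alpha_i^2)$, and this is handled by a careful bookkeeping of which terms to keep versus bound. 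I would expect \cref{thm:continuity_parameter_to_state} and \cref{lem:energy_solution} to be invoked only to convert $\|\bar\by\|_{\Hb^1_0}$ and $\|\bar\by\|_{\Lb^2(\Omega)}$ appearing on the right into $\|\bar\by\|_{\sY}$, which is harmless; the monotonicity assumption \cref{asmpt:basis_elements}-\ref{item:monotonicity_phi} is used to drop the monotone piece when it appears with a favorable sign, exactly as in \eqref{eq:apply_monotonicity}. Note that, consistently with the statement, only \cref{asmpt:basis_elements} (not \cref{asmpt:monotonicity_nonlinearity}) is needed, since monotonicity of $\bg_\balpha$ itself is not invoked — only monotonicity of the individual $\bvarphi_j$ and positivity of their lower bounds.
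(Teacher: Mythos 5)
Your proposal takes a genuinely different route from the paper, and the step you yourself flag as ``delicate'' is a genuine gap that your sketch does not close.

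First, the comparison. The paper does not test the weak formulation at all. It works directly with the strong form: since $\by^{\balpha^1},\by^{\balpha^2}\in\Hb^2(\Omega)$ solve \eqref{eq:Laplace_Model} with the \emph{same} right-hand side $\beps$, one has the pointwise identity $\bPhi(\by^{\balpha^1})\balpha^1-\bPhi(\by^{\balpha^2})\balpha^2=\Delta(\by^{\balpha^1}-\by^{\balpha^2})$ in $\Lb^2(\Omega)$. The proof then starts from ${\|\bar\balpha\|}_\infty^2\le\widetilde c\,{\|\bar\balpha\|}_2^2\le\widetilde c\,c_\varphi^{-2}{\|\bPhi(\by^{\balpha^1})\bar\balpha\|}_{\Lb^2(\Omega)}^2$ (this is where \cref{asmpt:basis_elements}-\ref{item:Basis_elements_bounded} enters), decomposes $\bPhi(\by^{\balpha^1})\bar\balpha=\big[\bPhi(\by^{\balpha^1})\balpha^1-\bPhi(\by^{\balpha^2})\balpha^2\big]+\big[\bPhi(\by^{\balpha^2})-\bPhi(\by^{\balpha^1})\big]\balpha^2$, bounds the first bracket by $\|\Delta\bar\by\|_{\Lb^2(\Omega)}$ via the identity above and the second by $k\,L_\varphi\,\alphaMax\,\|\bar\by\|_{\Lb^2(\Omega)}$ via Lipschitz continuity. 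No test functions, no monotonicity (neither of $\bg$ nor of the individual $\bvarphi_j$), and \cref{lem:energy_solution} and \cref{thm:continuity_parameter_to_state} are not invoked; only a norm comparison between $\|\bar\by\|_{\Lb^2(\Omega)}$ and $\|\bar\by\|_{\sY}$ is needed at the end.

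Now the gap. In your scheme you must extract $c_\varphi\,|\bar\alpha_i|$ for the maximizing index $i$ from the tested quantity $\int_\Omega\bv\cdot\sum_{j=1}^k\bar\alpha_j\,\bvarphi_j(\by^{\balpha^2})\rmd x$. A test function $\bv$ aligned with $\mathrm{sgn}(\bar\alpha_i)$ times a positive profile does make the $i$-th term positive, but every index $j$ with $\mathrm{sgn}(\bar\alpha_j)=-\mathrm{sgn}(\bar\alpha_i)$ then contributes \emph{negatively} --- precisely because $\bvarphi_j\ge c_{\varphi_j}>0$ --- and nothing in \cref{asmpt:basis_elements} prevents these contributions from cancelling the $i$-th one: take $\bvarphi_j$ close to $\bvarphi_i$ and $\bar\alpha_j=-\bar\alpha_i$, and the integrand $\sum_j\bar\alpha_j\bvarphi_j(\cdot)$ is near zero for \emph{every} admissible $\bv$. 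Positivity and monotonicity of the $\bvarphi_j$ and nonnegativity of $\balpha^1,\balpha^2$ do not give the sign control you need; the ``careful bookkeeping'' is exactly the missing ingredient, and I do not see how to supply it within your framework. (To be fair, the same mixed-sign cancellation is lurking in the paper's one-line inequality ${\|c_\varphi\bar\balpha\|}_2\le{\|\bPhi(\by^{\balpha^1})\bar\balpha\|}_{\Lb^2(\Omega)}$, which implicitly requires more than componentwise positivity of the basis; but the paper at least reduces the entire difficulty to that single algebraic inequality, whereas routing the argument through the weak formulation and sign-aligned test functions does not make it more tractable and additionally introduces the monotone term you then have to discard.)
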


\begin{proof}
	With $\bar\balpha\coloneqq\balpha^1-\balpha^2$ and $\bPhi = (\bvarphi_1,\ldots,\bvarphi_K)$, we find
	\begin{subequations}
		\begin{align}
			&{\|\bar\balpha\|}_{\infty}^2\leq \widetilde{c} \, {\| \bar{\balpha}\|}_2^2
			= \widetilde{c} \, c_\varphi^{-2}\,{\| c_\varphi \, \bar\balpha \|}_2^2
			\leq \widetilde{c} \, c_\varphi^{-2} \, \left\|  \bPhi(\by^{\balpha_1}) \bar\balpha  + \bPhi(\by^{\balpha_2})\balpha_2 -\bPhi(\by^{\balpha_2})\balpha_2 \right\|_{\Lb^2(\Omega)}^2
		      \label{eq:equiv_norms_positivity}
		\\
			&\quad\leq 2\,\widetilde{c} \,c_\varphi^{-2} \left(
				{\| \bPhi(\by^{\balpha_1})\balpha_1
				- \bPhi(\by^{\balpha_2})\balpha_2 \|}_{\Lb^2(\Omega)}^2
				+
				{\| \left( \bPhi (\by^{\balpha_2})
				- \bPhi(\by^{\balpha_1})\right)\balpha_2 \|}_{\Lb^2(\Omega)}^2
			\right)
		\label{eq:triangle_square}
		\\
			&\quad\leq 2 \, \widetilde{c} \, c_\varphi^{-2} \left( \| \Delta (\by^{\balpha^1} - \by^{\balpha^2}) \|_{\Lb^2(\Omega)}^2 + k^2\,L_\varphi^2 \, \alpha_{\max}^2 \, \|\by^{\balpha^1} - \by^{\balpha^2} \|_{\Lb^2(\Omega)}^2 \right)
		\label{eq:solution_linearity_lipschitz}
		\\
			&\quad\leq 2\,\widetilde{c}\,c_\varphi^{-2} \, \big(1
            +k\,L_\varphi\,\alphaMax \big)^2 \|\by^{\balpha^1}-\by^{\balpha^2}\|_{\sY}^2.
		\label{eq:H2_regularity}
		\end{align}
	\end{subequations}
	In \eqref{eq:equiv_norms_positivity} we use the equivalence of norms in $\Rb^K$ and \cref{asmpt:basis_elements}-\ref{item:Basis_elements_bounded};
	in \eqref{eq:triangle_square} we apply the triangle inequality and a binomial inequality;
	in \eqref{eq:solution_linearity_lipschitz} we use that the difference $\by^{\balpha^1}- \by^{\balpha^2}$ fulfills \eqref{eq:weak_formulation_difference} and that $\bvarphi_j$ are Lipschitz continuous for $j \in [K]$;
	in \eqref{eq:H2_regularity} we exploit that the Laplace operator is an isometry.
\end{proof}

\section{Greedy-reconstruction algorithm}
\label{sec:GreedyAlgorithm}

In this section, we present our strategy to tackle the identification problem \eqref{eq:final_identification}.
For this, we recall some notation.
We denote by $\by^{\bbeta,\beps}$ the solution of \eqref{eq:Laplace_Model} with the control $\beps$ and nonlinearity $\bg_\bbeta$.
We have for $1\leq k \leq K-1$ the set
$
    \sAk \coloneqq \{ \balpha \in \sAK \, | \, \alpha_j = 0, \; j = k+1,\ldots K  \}
$ (cf. \eqref{eq:set_Ak}).
Further, we set $\be^{k} \in \Rb^K$ as the $k$-th canonical vector in $\Rb^K$.
Hence, $\by^{\be^k,\beps}$ denotes the solution of \eqref{eq:Laplace_Model} with the control $\beps$ and nonlinearity $\bvarphi_k$.

We show now how to construct a set of optimized controls and basis elements, in particular with the goal of improving local convexity properties of \eqref{eq:final_identification}.
We follow the idea of \cite{BuchwaldCiaramella2021GreedyReconstruction,BuchwaldCiaramella2021ReconstructionofA,BuchwaldCiaramella2021OGR_SpinIdentification,MadaySalomon2009GreedyQuantum}, where linear and bilinear dynamical systems have been investigated.
It is the aim of this work to extend this strategy to the general semilinear PDE case \eqref{eq:Laplace_Model}.

The main idea is to split the reconstruction process of $\bg_{\balpha^\star}$ into offline and online phases.
In the offline phase, a greedy algorithm computes a set of optimized controls $\{\beps^m\}_{m=1}^K$ by exploiting only simulations of the semilinear model \eqref{eq:Laplace_Model} and without using any laboratory data. 
In the online phase, the computed controls $\{\beps^m\}_{m=1}^K$ are used experimentally to produce the laboratory data $\by_\star^{\beps^m}$ for $m \in [L]$ and to define the nonlinear problem \eqref{eq:Laplace_Model}.
While the online phase consists (mathematically) of solving a classical parameter-identification inverse problem, the offline phase requires the greedy algorithm that was first introduced in \cite{MadaySalomon2009GreedyQuantum} and analyzed and improved in \cite{BuchwaldCiaramella2021ReconstructionofA}.
The goal of this offline/online framework is to find a good approximation of the unknown operator $\bg_{\balpha^\star}$ for which the difference between observed experimental data and numerically computed data is the smallest for any control.
To do so, the algorithm attempts to distinguish numerical data for any two $\bg_{\balpha^1}$, $\bg_{\balpha^2} \in \text{span } \mathcal{G}$ \cite{MadaySalomon2009GreedyQuantum}.
This is achieved by an iterative procedure that performs a sweep over the basis $\mathcal{G}$ and computes a new control at each iteration.
Suppose that at iteration $k$ the control fields $\beps^1,\ldots,\beps^k$ are already computed; the new control function $\beps^{k+1}$ is obtained by two substeps, the so-called fitting and splitting step.
One first solves the identification problem \eqref{eq:greedy_fitting_step} which gives the coefficients $\bbeta^k=(\beta^k_j)_{j=1,\dots,k}$.
Then one computes the new control as the solution of the splitting step \eqref{eq:greedy_discriminatory_step}.

This general procedure can be optimized using some extensions.
More specifically, in each iteration, all elements of $\mathcal{G}$ are considered in parallel and the `best' one is chosen in a greedy way; see \cite[Section 6.1]{BuchwaldCiaramella2021GreedyReconstruction} for further explanation of the original and optimized greedy reconstruction algorithm in a linear case.

This offline procedure for our nonlinear case is summarized in \cref{algo:ONGR}.
We refer to \cite{BuchwaldCiaramella2021ReconstructionofA} for a general idea to prove that this algorithm is able to make problem \eqref{eq:final_identification} (locally) uniquely solvable.

Notice, that during the Algorithm, the basis $\mathcal{G}$ is reordered. Since only $k$ controls are generated through the execution of the algorithm, this has the effect that actually only the $k$ most sensible basis elements are selected, in the sense that they lead to a high difference in the splitting step, in particular higher than the tolerance $\mathrm{tol}_1$ (cf. \eqref{eq:greedy_discriminatory_step}).
In the final identification, only the first $k$ basis elements are taken into account. Hence, the reordering has an influence if less than $K$ controls are selected.
In general, the value of $\mathrm{tol}_1$ allows the algorithm to stop with a smaller amount of optimized controls, if any further controls do not provide additional information about the nonlinearity within the given tolerance.
In this work, we focus on the best possible reconstruction ability of our algorithm given a certain set $\mathcal{G}$. 
Hence, we keep $\mathrm{tol}_1$ fixed at machine precision and, therefore, generate the maximum amount of optimized controls.

\begin{algorithm}
	\caption{Optimized Nonlinear Greedy Reconstruction (ONGR) Algorithm}
	\label{algo:ONGR}
	\begin{small}
		\begin{algorithmic}[1]
			\Require A set of $K$ basis functions $\mathcal{G}=\{\bvarphi_1,\ldots,\bvarphi_K\}$
			and a tolerance $\mathrm{tol}_1>0$, and a regularization parameter $\nu>0$
			\State Compute $\beps^1$ and the index $i_1\in[K]$ by solving the \emph{initialization} problem 
			\begin{align*}
				\max_{i\in[K]}\max_{\beps \in \Ead} 
				\frac{1}{2}\standardNorm{\by^{\boldsymbol{0},\beps}-\by^{\be^i,\beps}}_{\Lb^2(\Omega)}^2 +\frac{\nu}{2} {\|\beps\|}_{\Lb^2(\Omega)}^2.
			\end{align*}
			\State Swap $\bvarphi_1$ and $\bvarphi_{i_1}$ in $\mathcal{G}$, and set $k=1$ and 
            $f_{max}=\frac{1}{2}\,\big\|\by^{\boldsymbol 0,\beps^1}-\by^{\be^{i_1},\beps^1}\big\|_{\Lb^2(\Omega)}^2$.
			\While{ $k\leq K-1$ and $f_{max}>\textrm{tol}_1$}
			\For{$\ell=k+1,\ldots,K$}
			\State \emph{Fitting step}: 
            Find $\bbeta^\ell$ that solves the problem
			\begin{equation}
				\label{eq:greedy_fitting_step}
				\min_{\bbeta\in\sAk}\frac{1}{2}
				\sum_{m=1}^{k}
				\standardNorm{\by^{\bbeta,\beps^m}-\by^{\be^{\ell},\beps^m}}_{\Lb^2(\Omega)}^2.
			\end{equation}
			\EndFor
			\State \emph{Splitting step}: Find $\beps^{k+1}$ and $i_{k+1}$ that solve the problem
			\begin{equation}
                \label{eq:greedy_discriminatory_step}
				\max_{i\in\{k+1,\ldots,K \}}
				\max_{\beps \in\Ead}\frac{1}{2}
				\standardNorm{\by^{\bbeta^{i},\beps}-\by^{\be^i,\beps}}_{\Lb^2(\Omega)}^2
                +\frac{\nu}{2}\,{\|\beps\|}_{\Lb^2(\Omega)}^2.
			\end{equation}
			\State Swap $\bvarphi_{k+1}$ and $\bvarphi_{i_{k+1}}$ in $\mathcal{G}$.
            \State Set 
            $f_{max}=\frac{1}{2}
				\|\by^{\bbeta^{i_{k+1}},\beps^{k+1}}-\by^{\be^{i_{k+1}},\beps^{k+1}}\|_{\Lb^2(\Omega)}^2$.
            \State Set $k \leftarrow k+1$.
			\EndWhile\\
			\Return $\mathcal{G}$ and the computed $\{\beps^m\}_{m=1}^k$.
		\end{algorithmic}
	\end{small}
\end{algorithm}


We now present the variational inequality corresponding to the first-order necessary optimality condition for the optimization problems appearing in \cref{algo:ONGR}.
For this, we in particular derive the adjoint equation. Notice that the variational inequality is equivalent to the Karush-Kuhn-Tucker system; see, e.g. \cite[Section 1.4]{Treoltzsch2010OCP_PDE}.
Let us define  $\bg_k(y)\coloneqq \bvarphi_k(y)$ for $y=(y_1,y_2)^\top \in\Rb^2$ an $k\in [K]$.
We start with the discussion of the fitting step at the $k$-th iteration. 
For this let us introduce $\sY^k \coloneqq \sY \times \ldots \times \sY$ ($k$ times) and define $\{\by^m_{k+1}\}_{m=1}^k \subset \sY^k$ as the solutions to
\begin{align*}
	-\Delta \by^m_{k+1} + \bg_{k+1}(\by^m_{k+1}) 
    = c\,\beps^m
	\qquad
	\text{ for }m \in [k]
\end{align*}
with homogeneous Dirichlet boundary conditions.
For the fixed controls $\{\beps^m\}_{m=1}^k \subset\Ead$ and with given $\{\by^m_{k+1}\}_{m=1}^k \subset \sY^k$,
the fitting step is given as the following minimization problem (the subscript $f$ stands for \enquote{fitting step})
%
\begin{align}
    \begin{split}
    	&\min J_f(\bbeta, \by^1,\ldots,\by^k)\coloneqq\sum_{m=1}^k\frac{1}{2} \standardNorm{\by^m - \by^m_{k+1}}_{\Lb^2(\Omega)}^2
        \\
    	&\hspace{10mm}\text{subject to }  -\Delta \by^{m} + \bg_{\bbeta}(\by^{m}) = \beps^m,
        \quad \by^m_{|\Gamma} = 0
        \quad
        \text{ for }m \in [k]
    	\\
    	&\hspace{10mm}\text{and }  \bbeta \in \sAk.
    \end{split}
    \label{eq:fitting_step_problem}
\end{align}
Using the parameter-to-state map $\sP_{\beps}$ defined in \cref{prop:paramter-to-state}, we can introduce the reduced cost 
\begin{align*}
	\widehat{J}_f(\bbeta) \coloneqq J_f(\bbeta, \sP_{\beps^1}(\bbeta),\ldots,\sP_{\beps^k}(\bbeta))
\end{align*}
and the reduced problem
\begin{align}
    \min\widehat J_f(\bbeta)
    \quad
    \text{s.t.}
    \quad
    \bbeta\in\sAk.
    \label{eq:fitting_step_reduced}
\end{align}
We can state the following theorem.
\begin{theorem}
    \label{thm:existence_solution_fitting_step}
    Let \cref{asmpt:basis_elements,asmpt:monotonicity_nonlinearity} hold.
    Then the optimization problem \eqref{eq:fitting_step_problem} has at least one solution $\bar\bbeta \in \sAk$.
\end{theorem}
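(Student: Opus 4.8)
The plan is to use the direct method of the calculus of variations on the reduced problem \eqref{eq:fitting_step_reduced}. First I would note that $\widehat J_f$ is bounded below by zero, so the infimum $m\coloneqq\inf_{\bbeta\in\sAk}\widehat J_f(\bbeta)$ exists and is finite; pick a minimizing sequence $(\bbeta_n)_{n\in\Nb}\subset\sAk$ with $\widehat J_f(\bbeta_n)\to m$. Since $\sAk\subset\sAK$ and $\sAK$ is compact (it is a closed bounded box in $\Rb^K$), and $\sAk$ is a closed subset of it, after passing to a subsequence we may assume $\bbeta_n\to\bar\bbeta$ for some $\bar\bbeta\in\sAk$. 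It remains to show $\widehat J_f$ is continuous (even lower semicontinuous would suffice) at $\bar\bbeta$, so that $\widehat J_f(\bar\bbeta)=\lim_n\widehat J_f(\bbeta_n)=m$, which makes $\bar\bbeta$ a minimizer.

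For the continuity of $\widehat J_f$, the key ingredient is the continuity of the parameter-to-state maps $\sP_{\beps^m}$, which is exactly \cref{thm:continuity_parameter_to_state} (applied with $\balpha^1=\bbeta_n$, $\balpha^2=\bar\bbeta$, both in $\sAk$): it gives $\|\sP_{\beps^m}(\bbeta_n)-\sP_{\beps^m}(\bar\bbeta)\|_{\sY}\le C\,k^2\|\bbeta_n-\bar\bbeta\|_\infty\to 0$. Since the embedding $\sY\hookrightarrow\Lb^2(\Omega)$ is continuous, this convergence also holds in $\Lb^2(\Omega)$. Then for each $m\in[k]$, writing $\by^m_n\coloneqq\sP_{\beps^m}(\bbeta_n)$ and $\bar\by^m\coloneqq\sP_{\beps^m}(\bar\bbeta)$, we have $\|\by^m_n-\by^m_{k+1}\|_{\Lb^2(\Omega)}\to\|\bar\by^m-\by^m_{k+1}\|_{\Lb^2(\Omega)}$ by the reverse triangle inequality, and squaring and summing over $m$ gives $\widehat J_f(\bbeta_n)=J_f(\bbeta_n,\by^1_n,\ldots,\by^k_n)\to J_f(\bar\bbeta,\bar\by^1,\ldots,\bar\by^k)=\widehat J_f(\bar\bbeta)$. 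This closes the argument.

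The main obstacle — though it is a mild one here — is ensuring that each $\bbeta_n$ really produces a well-defined state, i.e., that \cref{thm:Energy_laplace_solution} applies along the sequence: this needs $\bg_{\bbeta_n}$ to satisfy \cref{asmpt:monotonicity_nonlinearity}, which follows because $\bbeta_n\in\sAk\subset\sAK$ has nonnegative entries and each $\bvarphi_j$ is monotone non-decreasing with positive semi-definite Jacobian (a nonnegative combination of positive semi-definite matrices is positive semi-definite). This is precisely why \cref{prop:paramter-to-state} guarantees $\sP_{\beps^m}$ is well-defined on all of $\sAK$, so the composition defining $\widehat J_f$ makes sense throughout. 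I would also remark that, strictly speaking, one does not even need to extract a convergent subsequence abstractly: the compactness of $\sAk$ in the finite-dimensional space $\Rb^K$ together with the continuity of $\widehat J_f$ established above immediately yields attainment of the minimum by the Weierstrass theorem, which is the cleanest way to phrase the proof.
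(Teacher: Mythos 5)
Your proof is correct and follows essentially the same route as the paper: the direct method on the reduced problem, using compactness of $\sAk$ in $\Rb^k$ to extract a convergent minimizing subsequence and the Lipschitz continuity of the parameter-to-state map to pass to the limit in the cost. The only cosmetic difference is that you invoke \cref{thm:continuity_parameter_to_state} (the $\sY$-estimate) where the paper cites \cref{lem:energy_solution} (the $\Hb_0^1$-estimate); either suffices since only $\Lb^2(\Omega)$-convergence of the states is needed, and your closing remark that Weierstrass' theorem applies directly is exactly the observation implicit in the paper's argument.
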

\begin{proof} 
    Notice that the reduced cost $\widehat{J}_f$ is non-negative and thus there exists a minimizing sequence $\{\bbeta^\ell\}_{\ell \in \Nb} \subset \sAk$ such that
    \begin{align*}
        0\le\bar J_f = \inf \{ \widehat{J}_f(\bbeta) \, | \, \bbeta \in \sAk \} = 
        \lim_{\ell \rightarrow \infty} \widehat{J}_f(\bbeta^\ell) < \infty.
    \end{align*}
    We set $\by^{\bbeta,\beps}\coloneqq\{\sP_{\beps^m}(\bbeta)\}_{m=1}^k \subset \sY^k$.
    Due to \cref{thm:Energy_laplace_solution}, the sequence $\{\by^{\bbeta^\ell,\beps}\}_{\ell \in \Nb} \subset \sY^k$ is well defined.
    Since $\sAk$ is compact in $\mathbb{R}^k$, there exists a subsequence (which is still labeled by $\bbeta^\ell$) $\{\bbeta^\ell\}_{\ell\in\Nb} \subset \sAk$ of the minimizing sequence and an element $\bar\bbeta \in \sAk$ such that we have the strong convergence $\bbeta^\ell \rightarrow \bar\bbeta$ in $\Rb^k$ as $\ell \rightarrow \infty$.
    Due to \cref{lem:energy_solution} we have $\by^{\bbeta^\ell,\beps}\rightarrow\bar\by \coloneqq \by^{\bar\bbeta,\beps}$ in $\sY^k$.
    Using the lower semicontinuity of the norms and the continuity of the parameter-to-state map from $\sAK$ to $\sY$ (\cref{lem:energy_solution})
    \begin{align*}
        \widehat{J}_f(\bar\bbeta) \leq \lim_{\ell \rightarrow \infty} \widehat{J}_f(\bbeta^\ell) = \bar J_f.
    \end{align*}
\end{proof}
Let us now introduce the adjoint variable $\bq = \bq(x) = (q_1(x),q_2(x))^\top$. 
The adjoint equations for the fitting step read as follows
\begin{align*}
    - \Delta \bq^{m}
    + \bg'_{\bbeta}(\by^{m})^\top\bq^{m}
    =- \left(\by^m - \by^m_{k+1}\right),
    \qquad
    \text{for } m \in [k],
\end{align*}
completed with homogeneous Dirichlet boundary conditions. Since $\nabla\bg_\bbeta(y)$ is positive semidefinite, we can ensure a constrained qualification for \eqref{eq:fitting_step_reduced}, so that first-order necessary optimality conditions can be formulated; cf. \cite[Section~6.1.2]{Treoltzsch2010OCP_PDE}.

Supposing that $\bar\bbeta$ solves \eqref{eq:fitting_step_reduced}, the optimality condition of the fitting step in iteration $k$ reads
\begin{align*}	
	\left\langle 
        \nabla \widehat{J}_f(\bar\bbeta),\bbeta - \bar\bbeta
     \right\rangle_2 \geq 0
     \qquad\qquad
     \forall  \bbeta \in \mathscr{A}_{k},
     \qquad
\end{align*}
where $\nabla \widehat{J}_f = (\nabla_{\beta_1} \widehat{J}_f,\ldots,\nabla_{\beta_k} \widehat{J}_f)^\top$ is given by
\begin{align*}
     \nabla_{\beta_j}\widehat{J}_f = \nu \, \beta_j + 
    	\sum_{m=1}^k
    	\int_{\Omega} \bq^{m}(x) \cdot \bvarphi_{j} (\by^{m}(x))\rmd x,
        \qquad\qquad
        j \in [k].
\end{align*}
Now we fix $\bbeta$ and turn to the splitting step in which the new control $\beps^{k+1}$ is computed. 
The splitting step at iteration $k<K$ is given by
(the subscript $s$ stands for \enquote{splitting step})
\begin{align*}
    \begin{split}
    	&\min
    	J_s(\beps,\by^\bbeta,\by^{k+1})
        \coloneqq -\frac{1}{2} \standardNorm{\by^\bbeta - \by^{k+1}}_{\Lb^2(\Omega)}^2 + \frac{\nu}{2}\standardNorm{\beps}^2_{\Lb^2(\Omega)}
    	\\
    	&\hspace{5mm}\text{subject to} \quad
    	\begin{cases}
    		-\Delta \by^{\bbeta} + \bg_{\bbeta}(\by^\bbeta) = \beps, \quad \by^{\bbeta}_{|\Gamma}=0, 
        \\
    		-\Delta \by^{k+1} + \bg_{k+1}(\by^{k+1}) = \beps, \quad \by^{k+1}_{|\Gamma}=0,
    	\end{cases}
	    \\
	    &\hspace{5mm}
    	\text{and } \beps \in \Ead.
    \end{split}
\end{align*}
Exploiting the control-to-state map $\sS$ defined in \cref{thm:Continuity_control-to-state-map}, we can introduce the reduced cost
\begin{align*}
	\widehat{J}_s(\beps) \coloneqq 
    J_s(\beps,\sS_\bbeta(\beps), \sS_{\be^{k+1}}(\beps)),
\end{align*}
where $\be^{k} \in \Rb^K$ denotes again the $k$-th unit vector in $\Rb^K$.
We also introduce the reduced problem
\begin{align}
    \min\widehat J_s(\beps)
    \quad
    \text{s.t.}
    \quad
    \beps\in\Ead.
    \label{eq:splitting_step_reduced}
\end{align}
We can state the following theorem on the existence of solutions to \eqref{eq:splitting_step_reduced}.

\begin{theorem}
    \label{thm:existence_solution_splitting_step}
    Let \cref{asmpt:basis_elements,asmpt:monotonicity_nonlinearity} hold.
    Then the optimization problem \eqref{eq:splitting_step_reduced} has at least one solution $\bar\beps \in \Ead$.
\end{theorem}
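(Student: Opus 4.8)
The plan is to apply the direct method of the calculus of variations to the reduced problem \eqref{eq:splitting_step_reduced}. First I would observe that $\widehat J_s$ is bounded from below on $\Ead$: by \cref{thm:Energy_laplace_solution} the states $\sS_\bbeta(\beps)$ and $\sS_{\be^{k+1}}(\beps)$ are bounded in $\sY$, hence in $\Lb^2(\Omega)$, uniformly in $\beps\in\Ead$, so that $-\tfrac12\|\sS_\bbeta(\beps)-\sS_{\be^{k+1}}(\beps)\|_{\Lb^2(\Omega)}^2\ge -C$ for some $C>0$; together with $\tfrac{\nu}{2}\|\beps\|_{\Lb^2(\Omega)}^2\ge 0$ this yields $\inf_{\Ead}\widehat J_s>-\infty$, so a minimizing sequence $\{\beps^n\}_{n\in\Nb}\subset\Ead$ exists.

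Next I would extract limits. Since $\Ead$ is bounded in $\Lb^\infty(\Omega)$, hence in $\Lb^2(\Omega)$, the sequence $\{\beps^n\}$ has a subsequence (not relabeled) converging weakly in $\Lb^2(\Omega)$ to some $\bar\beps$; because $\Ead$ is convex and closed in $\Lb^2(\Omega)$ it is weakly closed, so $\bar\beps\in\Ead$. Writing $\by^{\bbeta,n}\coloneqq\sS_\bbeta(\beps^n)$ and $\by^{k+1,n}\coloneqq\sS_{\be^{k+1}}(\beps^n)$, \cref{thm:Energy_laplace_solution} bounds both sequences in $\sY$; since $\sY=\Hb^2(\Omega)\cap\Hb^1_0(\Omega)\hookrightarrow\hookrightarrow\Lb^2(\Omega)$ compactly (Rellich--Kondrachov on the bounded Lipschitz domain $\Omega$), after a further subsequence we get $\by^{\bbeta,n}\to\bar\by^\bbeta$ and $\by^{k+1,n}\to\bar\by^{k+1}$ strongly in $\Lb^2(\Omega)$ and weakly in $\sY$.

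Then I would identify the limits with the states belonging to $\bar\beps$. Passing to the limit in the weak formulation \eqref{eq:weak_solution} for each sequence — the bilinear term via weak $\Hb^1_0$ convergence, the right-hand side via weak $\Lb^2$ convergence of $\beps^n$, and the semilinear term $\int_\Omega\bv\cdot\bg_\bbeta(\by^{\bbeta,n})\rmd x$ via strong $\Lb^2$ convergence of the states together with the continuity and local boundedness of $\bg_\bbeta$ (and of $\bg_{k+1}=\bvarphi_{k+1}$) from \cref{cor:assumptions_CasasRoesch} — shows that $\bar\by^\bbeta$ and $\bar\by^{k+1}$ are weak solutions for the control $\bar\beps$; by the uniqueness part of \cref{thm:Energy_laplace_solution} they coincide with $\sS_\bbeta(\bar\beps)$ and $\sS_{\be^{k+1}}(\bar\beps)$. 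Finally I would combine the pieces: the strong $\Lb^2$ convergence of the states gives $\|\by^{\bbeta,n}-\by^{k+1,n}\|_{\Lb^2(\Omega)}^2\to\|\bar\by^\bbeta-\bar\by^{k+1}\|_{\Lb^2(\Omega)}^2$, while the weak $\Lb^2$ lower semicontinuity of $\beps\mapsto\|\beps\|_{\Lb^2(\Omega)}^2$ gives $\liminf_n\|\beps^n\|_{\Lb^2(\Omega)}^2\ge\|\bar\beps\|_{\Lb^2(\Omega)}^2$; hence $\widehat J_s(\bar\beps)\le\liminf_n\widehat J_s(\beps^n)=\inf_{\Ead}\widehat J_s$, so $\bar\beps\in\Ead$ is a minimizer.

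\textbf{Main obstacle.} The delicate point is the sign of the tracking term: since $J_s$ contains $-\tfrac12\|\cdot\|_{\Lb^2(\Omega)}^2$, weak lower semicontinuity is not enough, and one genuinely needs \emph{strong} convergence of the states. This is exactly what the $\sY$-regularity of \cref{thm:Energy_laplace_solution} provides through the compact embedding $\sY\hookrightarrow\hookrightarrow\Lb^2(\Omega)$; the only care needed is in passing to the limit in the semilinear term, which is routine once strong $\Lb^2$ convergence of the states and the local boundedness of the nonlinearity are in hand.
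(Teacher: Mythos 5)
Your argument is correct, but it follows a genuinely different (self-contained) route from the paper: the paper's proof is a one-line verification that $\widehat J_s$ is bounded from below (via the uniform state bound of \cref{thm:Energy_laplace_solution} and the boundedness of $\Ead$) followed by an appeal to Theorem 3.1 of \cite{CasasRoesch2020NonmonotoneSemilinearElliptic}, whereas you carry out the direct method in full. Your version makes explicit the mechanism that the citation hides and that you correctly flag as the delicate point: because the tracking term enters with a negative sign, weak lower semicontinuity alone is useless for it, and one must upgrade to strong $\Lb^2(\Omega)$ convergence of the states; this is exactly what the uniform $\sY$-bound plus the compact embedding $\sY\hookrightarrow\hookrightarrow\Lb^2(\Omega)$ delivers (in fact the compact embedding $\Hb^1_0(\Omega)\hookrightarrow\hookrightarrow\Lb^2(\Omega)$ already suffices once the states are bounded in $\sY$). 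The passage to the limit in the semilinear term is also fine as you sketch it: along a further subsequence the states converge a.e., and \cref{asmpt:basis_elements}-\ref{item:Basis_elements_bounded} gives a global bound on $\bg_\bbeta$, so dominated convergence applies; uniqueness from \cref{thm:Energy_laplace_solution} then identifies the limit states with $\sS_\bbeta(\bar\beps)$ and $\sS_{\be^{k+1}}(\bar\beps)$. What the paper's approach buys is brevity and reuse of a result proved in greater generality; what yours buys is a transparent, verifiable proof that does not require the reader to check that the hypotheses of the cited theorem are met in this vector-valued setting.
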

\begin{proof}
    Since our $\widehat J_s$ is bounded from below by the virtue of \cref{thm:Energy_laplace_solution} and the boundedness of the set of admissible controls $\Ead$, we can apply \cite[Theorem 3.1]{CasasRoesch2020NonmonotoneSemilinearElliptic}.
\end{proof}

The adjoint equations for the fitting step read as follows
\begin{align*}
	-\Delta\bq^{\bbeta}  + \bg'_{\bbeta}(\by^{\bbeta})^\top \bq^{\bbeta}
	&=- \nabla_{\by^{\bbeta}} J_s(\beps;\by),
	\\
	-\Delta\bq^{k+1} + \bg'_{k+1}(\by^{k+1})^\top \bq^{k+1}
	&=- \nabla_{\by^{k+1}} J_s(\beps;\by),
\end{align*}
completed with homogeneous Dirichlet boundary conditions.
Since $\nabla\bg_\bbeta(y)$ and $\nabla\bg_{k+1}(y)$ are positive semidefinite, we can ensure a constrained qualification for \eqref{eq:fitting_step_reduced}, so that first-order necessary optimality conditions can be formulated; cf. \cite[Section~6.1.2]{Treoltzsch2010OCP_PDE}.

Suppose that $\bar\beps$ solves \eqref{eq:splitting_step_reduced}, then an optimality condition of the splitting step reads
\begin{align*}
	\left\langle \nabla_\beps \widehat{J}_s(\bar\beps), \beps - \bar\beps \right\rangle_{\Lb^2(\Omega)} \geq 0
    \qquad\qquad
    \forall  \beps \in \Ead,
\end{align*}
where $\nabla_\beps \widehat{J}_s$ is given $\nabla_\beps \widehat{J}_s(\beps) = \nu\,\beps - (\bq^\bbeta + \bq^{k+1})$.

We solve all optimization problems in \cref{algo:ONGR} using the MATLAB built-in function \texttt{fmincon} which uses the interior-point method; see, e.g., \cite{NocedalWright2006NumOpt}. However, any other suitable subroutine could be considered.

\section{Numerical approximation}
\label{sec:NumApprox}

In this section, we present the numerical approximation of the governing equation.
First, we describe our method to solve the governing model \eqref{eq:Laplace_Model}. 
We choose now $\Omega=(-x_{\max},x_{\max})^2$ for a fixed $x_{\max}>0$. We set a numerical grid that provides a partitioning of $\Omega$ in $N\times N$, $N > 1$, equally-spaced non-overlapping square cells of side length $h = 2x_{\max}/N$. 
We define the nodal points
\begin{align*}
	x_1^i=i\,h- x_{\max},\qquad x_2^j = j \, h - x_{\max}.
\end{align*}
Our discrete domain is then given by
\begin{align*}
	\Omega_{h}\coloneqq \bigcup_{i,j = 1}^{N} \omega_{ij},
\end{align*}
where an elementary cell is defined as
\begin{align*}
	\omega_{ij} \coloneqq \left\lbrace (x_1,x_2) \in \Omega \; \big\vert \;
	(x_1,x_2) \in 
	\left(x_1^{i-1}, x_1^{i}\right) \times \left(x_2^{j-1}, x_2^j\right)  \right\rbrace,\quad i,j \in [N].
\end{align*}
In our numerical examples, we use $x_{\max}=1$.

We define the discrete vectors $\by^N,\beps^N \in\Rb^{2(N+1)^2}$ as $\by^N = (y^0_1,\ldots,y^{N^2}_1,y_2^0,\ldots y^{N^2}_2)^\top$ and $\beps^N = (\varepsilon^0_1,\ldots,\varepsilon_1^{N^2},\varepsilon^0_2,\ldots,\varepsilon_2^{N^2})^\top$.

After discretization of \eqref{eq:Laplace_Model}, we end up with 
\begin{align}
	\label{eq:Laplace_Model_discrete}
	-A \by^N + \bg(\by^N) = \beps^N,
\end{align}
where $A$ is the discrete Laplace operator using finite differences and zero boundary conditions \cite[Section 2.2]{JovanovicSueli14}.

We consider a fixed-point method for solving \eqref{eq:Laplace_Model_discrete} numerically \cite{Amann1975NonlinearFixedPoint,CiaramellaGander2018LectureIterative}.
The procedure with relaxation parameter $\lambda_{\mathrm{a}} \in [0,1]$ is summarized in \cref{algo:fix-point}.
\begin{algorithm}
	\caption{Fixed-point method}
	\label{algo:fix-point}
	\begin{algorithmic}[1]
		\Require Parameter $0\leq\lambda_{\mathsf a}\leq 1$, 
  tolerance $\mathrm{tol}_2>0$, maximum iteration depth $\ell_{\max}\gg1$
		\State Set $\ell=0$ and $E\gg \mathrm{tol}_2$
		\State Generate initial guess $\by_0^N$ that satisfies $-A\by^N_0 = \beps^N$
		\While{$E<tol$ \textbf{and} $\ell<\ell_{\max} $}
		      \State Calculate $\widetilde{\by}^N$ as solution to
		      $-A\widetilde{\by}^N = \beps^N - \bg(\by^N_{\ell})$
		      \State Set $\by^N_{\ell+1}=\lambda_{\mathsf a}\by^N_\ell+(1-\lambda_{\mathsf a})\widetilde{\by}^N$ and $E= h \, \|\by^N_{\ell+1}-\by^N_{\ell}\|_2$
		\State Set $\ell \leftarrow \ell+1$
		\EndWhile \\
		\Return $\by^N_\ell$
	\end{algorithmic}
	\label{GlNVAlg}
\end{algorithm}
Next, we explain the ingredients for \cref{algo:ONGR} that we use in our numerical experiment. We choose the following nonlinearity, motivated by coupled Lotka-Volterra equations in the stationary regime (see, e.g., \cite{GambinoLombardoSammartino2009LotkaVolterraPDEUncontrolledNonlinear,Ibanez2017OC_LotkaVolterra,MortujaChaube2021AnalysisPredatorPraySystem}).
In this setting, the variables $y_1$ and $y_2$ have the significance of the distribution of two different species that interact with the nonlinearity $\bg_\balpha$. 
Furthermore, usually this interaction is up to constants equal in both components. 
Hence, we choose
\begin{align*}
    \bg_\balpha(y) \coloneqq (\gamma_1 \, G_\balpha(y), -\gamma_2 \, G_\balpha(y))^\top,
    \label{eq:LotkaVolterra_nonlin}
\end{align*}
where $\gamma_1 \geq \gamma_2>0$ are given constants and
\begin{align}
    G_\balpha(y) \coloneqq \sum_{j=1}^K \alpha_j \varphi_j(y),
\end{align}
with basis functions $\varphi_j : \Rb^2\rightarrow\Rb$, $j \in [K]$.

We use the two-dimensional polynomials as basis functions to reconstruct the unknown nonlinearity $G_\balpha$. 
More precisely, we approximate the nonlinearity using monomials up to entrywise degree $P$, i.e. the basis
\begin{align}
    \mathcal{G}
    =\big\{\varphi_1,\ldots,\varphi_{K}\big\}
    \coloneqq \big\{1,y_1,y_2,y_1y_2,y_1^2,y_2^2,y_1^2y_2,\ldots,y_1^Py_2^P\big\}.
	\label{eq:Polynomials_definition}
\end{align}
The cardinality of $\mathcal{G}$, which is the maximum number of iterations $K$ in the algorithm, is then given by $K=|\mathcal{G}| = \binom{2+P}{P}$.
With this setting, the support of the solution $\by^\balpha$ of \eqref{eq:Laplace_Model} does not need to be known in advance. 
Furthermore, it is possible to make predictions of the nonlinearity outside of the support of the solutions due to the non-locality of the monomials.

We now discuss \cref{asmpt:basis_elements} for our choice of the basis elements:
1) All basis elements $\varphi_j$ are smooth and in particular (locally) Lipschitz continuous since they are monomials; 
2) The basis elements $\varphi_j$ are not bounded away from zero and in general not bounded from above.
However, on every bounded set in $\Rb^2_+$ they are also bounded and strictly greater than zero.
3) All monomials $\varphi_j$ are monotonically increasing for nonnegative entries. 
However, by our special structure \eqref{eq:LotkaVolterra_nonlin}, the nonlinearity $\bg_\balpha$ is not monotonically increasing on the full space $\Rb^2$ but on a (non-empty) subspace.
   
Hence our choice of monomials does not fit perfectly with our assumptions.
However, they can be fulfilled after restricting ourselves to a non-empty subset of $\Rb^2$. Furthermore, even though the theoretical assumptions are not completely fulfilled, we observe that our strategy works very well in our numerical experiments.

\section{Numerical experiments}
\label{sec:NumExperiments}

In the following, we present the results of numerical tests in order to validate the reconstruction ability of our strategy described above.

We consider three different desired nonlinearities $G_\star^i$, $i=1,2,3$ and aim at reconstructing them. 
All have different fundamental properties:
One of them lies within the span of the basis $\mathfrak{G}$ (cf. \cref{sec:Setting}).
The other two can only be approximated by monomials. One of them is bounded and enjoys some similarities to the basis elements and can therefore be approximated up to some tolerance more easily by the basis elements of $\mathcal{G}$ while the other one is unbounded more basis elements are needed to approximate it to the same tolerance.
Specifically, we choose
\begin{align}
    \label{eq:Nonlinearities}
	G^1_\star(y)= 0.05y_1y_2,
	\quad 
    G^2_\star(y) = 0.01\sin(2y_1)\sin(2y_2),
    \quad
    G^3_\star(y)= 0.01\exp(2y_1)\exp(2y_2).
\end{align}
These nonlinear functions are plotted in \cref{fig:Desired_nonlinearites}.
\begin{figure}
	\begin{subfigure}[l]{\thirdsPlotsfactor\textwidth}
		\includegraphics[width=\textwidth]{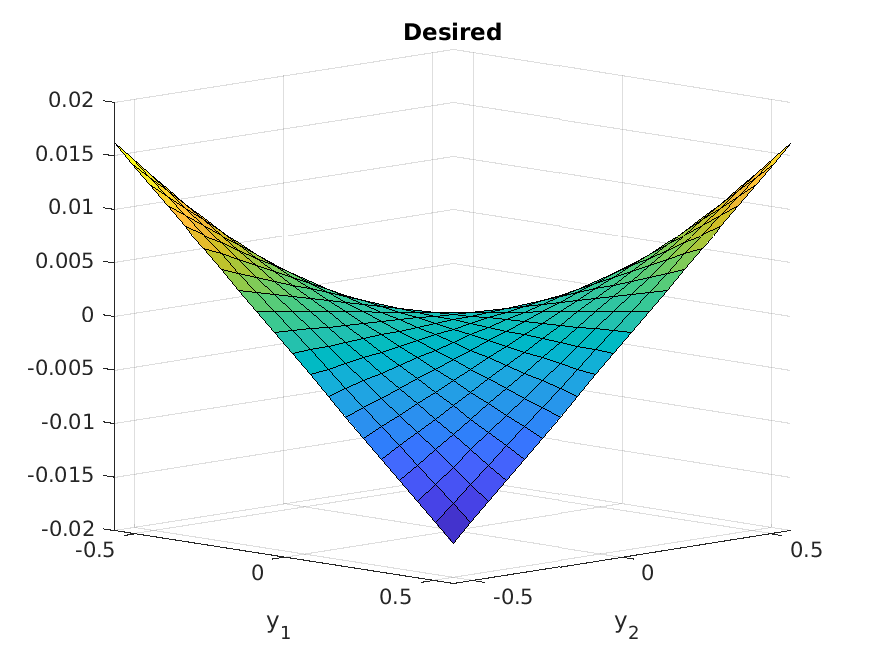}
		\caption{Desired bilinear $G^1_\star$}
		\label{subfig:Bilinear}
	\end{subfigure}
	\hfill
	\begin{subfigure}[l]{\thirdsPlotsfactor\textwidth}
		\includegraphics[width=\textwidth]{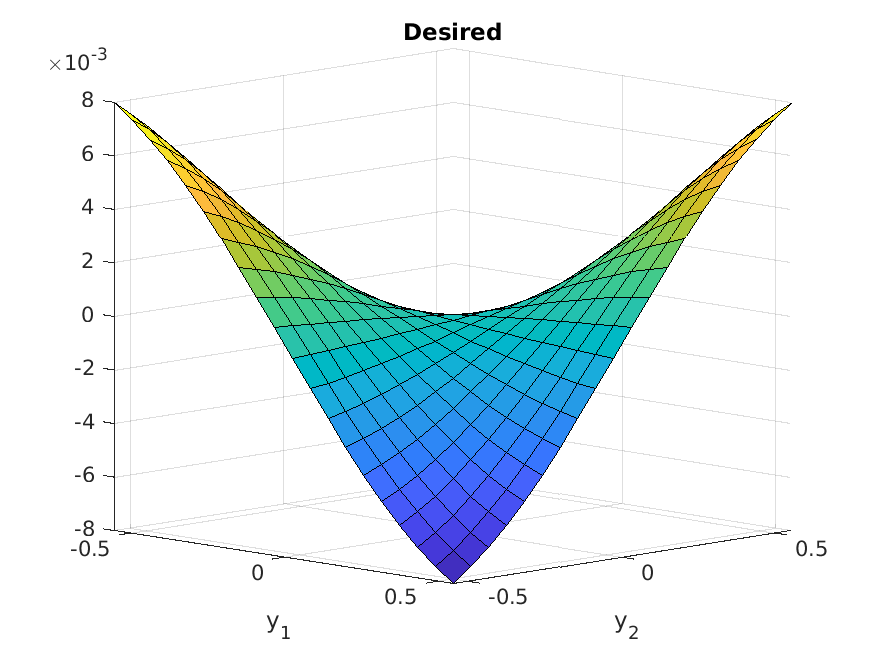}
		\caption{Desired sinusoidal $G^2_\star$}
		\label{subfig:sinusoidal}
	\end{subfigure}
	\hfill
	\begin{subfigure}[l]{\thirdsPlotsfactor\textwidth}
		\includegraphics[width=\textwidth]{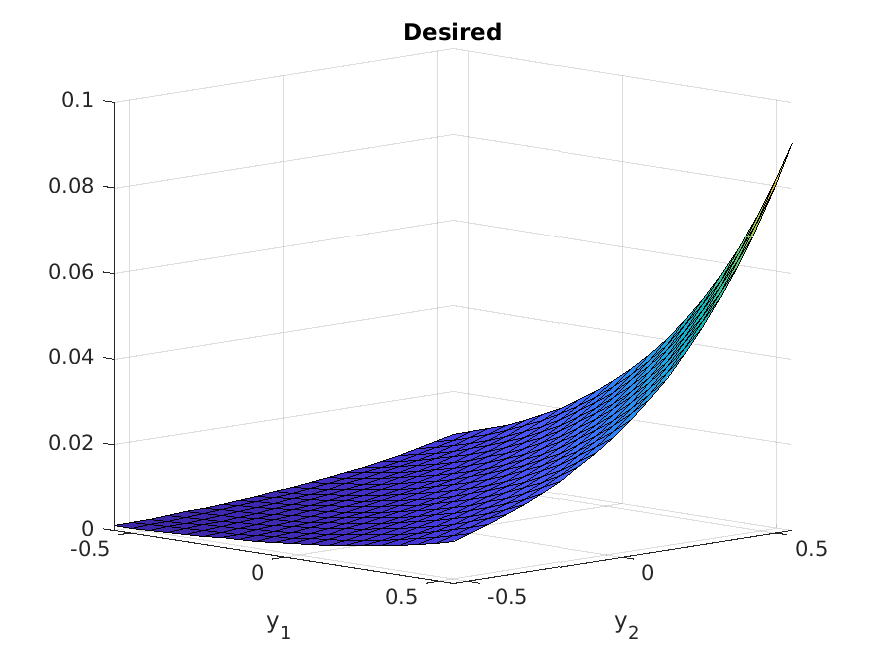}
		\caption{Desired exponential $G^3_\star$}
		\label{subfig:exponential}
	\end{subfigure}
	\caption{The three different desired nonlinearities $G^i_\star$, $i=1,2,3$ defined in \eqref{eq:Nonlinearities}.}
	\label{fig:Desired_nonlinearites}
\end{figure}
Furthermore, we use $\beps_\mathsf a = (-1,-1)^\top$, $\beps_\mathsf b = (1,1)^\top$, $\gamma_1=\gamma_2=0.2$ and run \cref{algo:ONGR} for three different bases $\mathfrak{G}$ for monomials of (entrywise) degree $P = 2,3,5$, respectively.
In our setting, we use $\lambda_{\mathsf a}=0$ in \cref{algo:fix-point}.

We now introduce the \emph{solution set} $\Sc^m_\balpha$ corresponding to the $m$-th control and using the coefficients $\balpha\in\sAK$ as
	\begin{align}
		\Sc^m_\balpha \coloneqq \left\lbrace \by^{\balpha,\beps^m}(x) \, \Big| \, x \in \Omega  \right\rbrace \subset \Rb^2.
		\label{eq:solutionSet}
	\end{align}
Furthermore, we define $\Sc_\balpha = \cup_{m=1}^K \Sc^m_\balpha$.

In order to evaluate our results, we consider the error
\begin{align}
    \mathfrak{e}(y;\balpha) &\coloneqq
    G_\star(y) - g_\balpha(y)
    = 
    G_\star(y) - \sum_{j=1}^K \alpha_j \varphi_{j} (y)
    \label{eq:definition_error}
\end{align}
between the true and reconstructed nonlinearity for $\balpha \in \sAK$ and $y \in \Omega_\Sc \subset \Rb^2$.

The subset $\Omega_\Sc$ is the smallest square in $\Rb^2$ that contains all points of the corresponding solution set $\Sc_\balpha$.
Notice that the final identification only takes the values on the solution set $\Sc_\balpha$ into account because only there we have information from the underlying system of equations.
However, since we know the nonlinearities that we try to reconstruct, we are able to define the error on the whole domain $\Omega_\Sc$.

Recall, that the true and reconstructed nonlinearities $G_\star$ and $G_\balpha$ are defined on the whole $\Rb^2$ since we use nonlocal basis functions. Because of this, the definition of the error $\mathfrak{e}$ can be extended from $\Omega_\Sc$ to $\Rb^2$ in a straight forward manner.

\subsection{Reconstruction of a bilinear function}
\label{sec:NumExperiments2}
Let us consider first the bilinear $G^1_\star(y)= 0.05y_1y_2$, shown \cref{subfig:Bilinear}, which clearly lies in the span $\mathcal{G}$ for any order $P\geq 1$.
We run \cref{algo:ONGR}, use the selected controls to compute the data, and solve the corresponding online identification problem \eqref{eq:final_identification}.
The results are shown in \cref{fig:bilinear_nonlinearity}, where the three different subfigures correspond to the three different bases $\mathcal{G}$ of order $P = 2,3,5$.
\begin{figure}
	\begin{subfigure}[l]{\thirdsPlotsfactor\textwidth}
		\includegraphics[width=\textwidth]{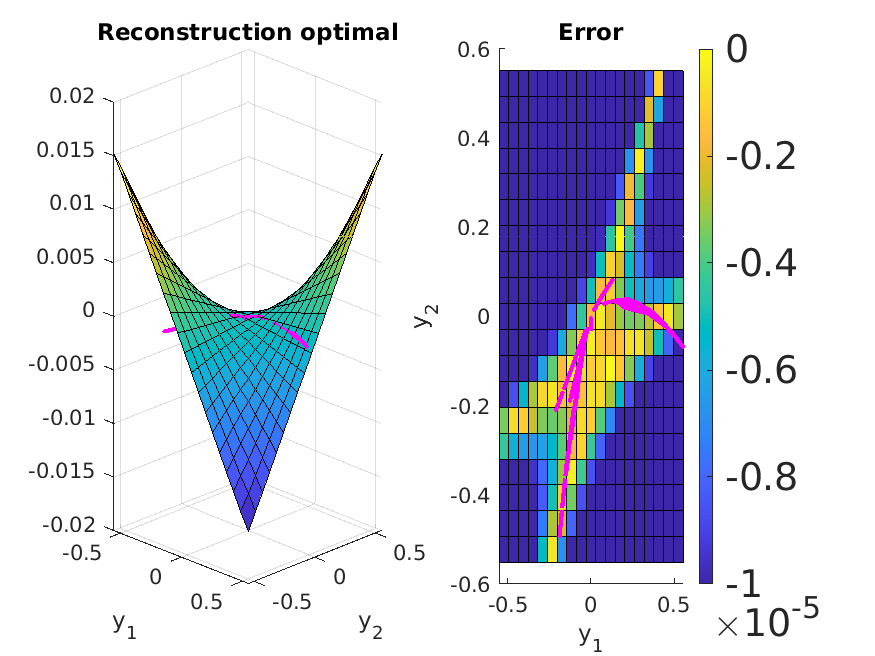}
	\end{subfigure}
	\hfill
	\begin{subfigure}[l]{\thirdsPlotsfactor\textwidth}
		\includegraphics[width=\textwidth]{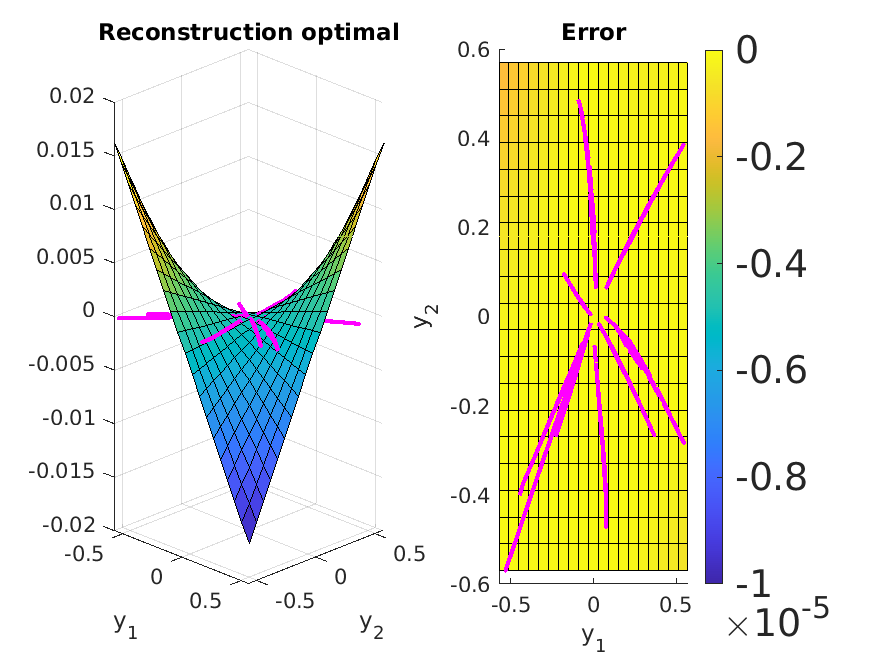}
	\end{subfigure}
	\hfill
	\begin{subfigure}[l]{\thirdsPlotsfactor\textwidth}
		\includegraphics[width=\textwidth]{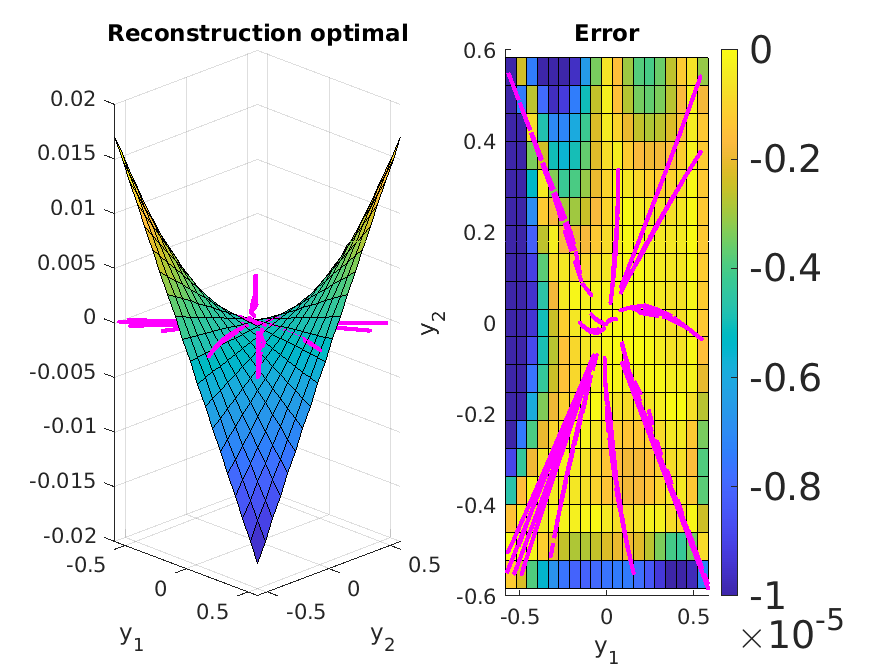}
	\end{subfigure}
	\caption{Reconstruction and error plots (cf. \eqref{eq:definition_error}) for $P=2,3,5$ and the bilinear nonlinearity with solutions curves in magenta.}
	\label{fig:bilinear_nonlinearity}
\end{figure}

Each subfigure of \cref{fig:bilinear_nonlinearity} contains the nonlinearity $G_{\balpha}$ reconstructed by problem \eqref{eq:final_identification} (left plot) and the error $\mathfrak{e}(y;\balpha)$ between the true and the reconstructed nonlinearity (right plot).
In the error plot, the solution sets $\Sc^m_\balpha$, $m \in [ K]$, are plotted in magenta.

We observe that the order of magnitude of the error is quite small for all three cases.
As discussed above, this is to be expected since the desired nonlinearity $G^1_\star$ is one of the basis elements.
However, even though the final identification for $P=5$ has access to the most data points (as shown by the number of magenta lines), the error is actually smallest for $P=3$.
The reason for this is that for $P=5$ more basis elements with higher polynomial degrees are taken into account.
Hence, in particular, at the boundary of the domain the error $\mathfrak{e}$ might become quite large if this part of the domain does not contain the solution sets; see also the explanation after the definition of the error in \eqref{eq:definition_error}.
To avoid this issue, it even seems that \cref{algo:ONGR} is trying to select the controls such that more parts of $\Omega_\Sc$ contain solution sets (cf. beginning of \cref{sec:NumExperiments}).
This is particularly apparent for $P=3$ and $P=5$.

In order to investigate further how \cref{algo:ONGR} chooses the controls, we consider the solutions
\begin{align*}
	y_1(x) = \eta \, \kappa(x_1,x_2),
	\quad
	y_2(x) = -\vartheta \, \kappa(x_1,x_2)
	\quad
	\text{ with }
	\;\kappa(x_1,x_2) \coloneqq \sin\left(\frac{x_1+1}{2}\pi\right) \sin\left(\frac{x_2+1}{2} \pi\right).
\end{align*}
inside the domain $\Omega=(-1,1)^2$, for some $\eta,\vartheta \in \Rb\backslash \{0\}$ with $\vartheta>\eta$.
Then, the graphs of the solutions $(y_1,y_2)$ in the $y_1$--$y_2$ plane are given by straight lines starting in the origin, having slope $\nicefrac{-\vartheta}{\eta}$ and length $\sqrt{\eta^2+\vartheta^2}$. 
Hence, we can reach every point in the $y_1$--$y_2$ plane within the $[\eta,\vartheta]$ rectangle.
The partial derivatives for $i=1,2$ are given by
\begin{align*}
	\partial^2_{x_ix_i} y_1(x)=-\frac{\pi^2}{4} y_1(x),
    \qquad
    \partial^2_{x_ix_i} y_2(x) =\frac{\pi^2}{4} y_2(x),
\end{align*}
which implies by the structure of our equation that 
\begin{align}
	\label{eq:constructed_controls}
	\begin{aligned}
		-\Delta y_1 + 0.05\gamma_1 y_1 y_2 
		&=\eta \frac{\pi^2}{2} \kappa(x_1,x_2)
		-0.05\gamma_1 \, \eta \, \vartheta \, \kappa(x_1,x_2)^2,
  \\
		-\Delta y_2 -0.05\gamma_2 y_1  y_2
		&=-\vartheta \frac{\pi^2}{2} \kappa(x_1,x_2)
		+0.05\gamma_2 \, \eta \, \vartheta \,\kappa(x_1,x_2)^2.
	\end{aligned}
\end{align}
Recalling \eqref{eq:Laplace_Model}, we observe that by choosing the controls according to the right-hand side of \eqref{eq:constructed_controls}, the corresponding solutions are able to reach any point within the $[\eta,\vartheta]$ rectangle.

In \cref{fig:results_controls}, we plot on the left a representative selection of the controls $\{\beps^m\}_{m=1}^K$ found by \cref{algo:ONGR}, and on the right some controls chosen according to the right-hand side of \eqref{eq:constructed_controls}.
\begin{figure}
	\includegraphics[width=0.45\textwidth]{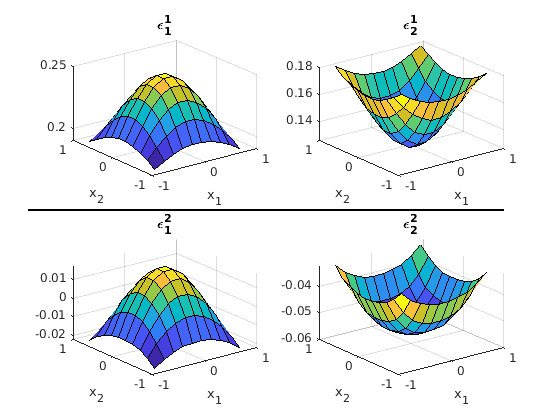}
	\hfill
	\includegraphics[width=0.45\textwidth]{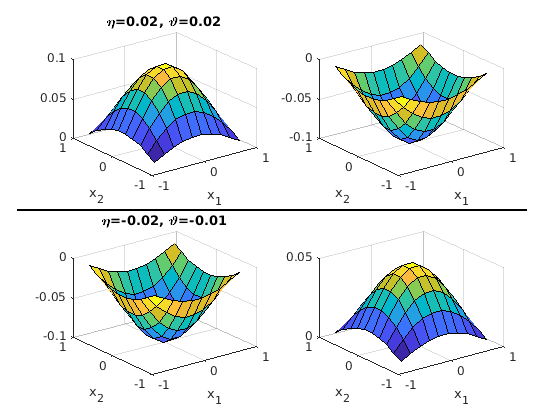}
	\caption{Controls functions. 
		Left: two pairs of controls obtained by \cref{algo:ONGR}; Right: two pairs of controls with the same structure as the right-hand side of \eqref{eq:constructed_controls} with different $\eta,\vartheta$. }
	\label{fig:results_controls}
\end{figure}
We observe that \cref{algo:ONGR} attempts to compute controls that mimic the structure of the right-hand side in \eqref{eq:constructed_controls} in order to reach different parts of the domain.

As a way of measuring the effectiveness of our procedure, we compare the behavior of the error when applying random controls for $P=5$.
For this, we consider the bilinear nonlinearity $G_\star^1(y_1,y_2) = 0.05y_1y_2$.
As controls, we (randomly) choose constant functions in $\Ead$ (cf. \eqref{eq:admissible_set_controls} for the definition of $\Ead$).
When we now try to reconstruct the nonlinearity, we obtain the results plotted in \cref{fig:identification_constant_controls,fig:identification_constant_controls_diag}.
In \cref{fig:identification_constant_controls}, we plot on the left the reconstructed nonlinearity and on the right the error $\mathfrak e$ together with the solution sets $S^m_\balpha$, $m \in [K]$, applying 19 random controls in $\Ead$ having a constant value.
We see clearly that the true nonlinearity is not recovered and that all solution sets lie on one straight line in the $y_1$-$y_2$-plane.
In \cref{fig:identification_constant_controls_diag}, we plot the desired and reconstructed nonlinearity on this straight line together with the error between them.
We see that there is a very good alignment.
Hence the final identification problem \eqref{eq:final_identification} is solved very precisely, however, this does not directly lead to a good reconstruction of the nonlinearity. 
The reason for this is, that the controls -- and with this the data -- were chosen poorly in the sense that they provide barely insights into the behavior of the nonlinearity. 
On the contrary, the controls that were found by our algorithm are able to reconstruct the nonlinearity very well as we have seen above.
Furthermore, some of them remind us of the structure of the controls that we constructed in \eqref{eq:constructed_controls} by theoretical considerations. 
This is visualized in \cref{fig:results_controls}.
Hence, the algorithm seems to provide an effective tool to generate robust controls automatically.
\begin{figure}
	\begin{subfigure}[l]{0.42\textwidth}
		\includegraphics[width=\textwidth]{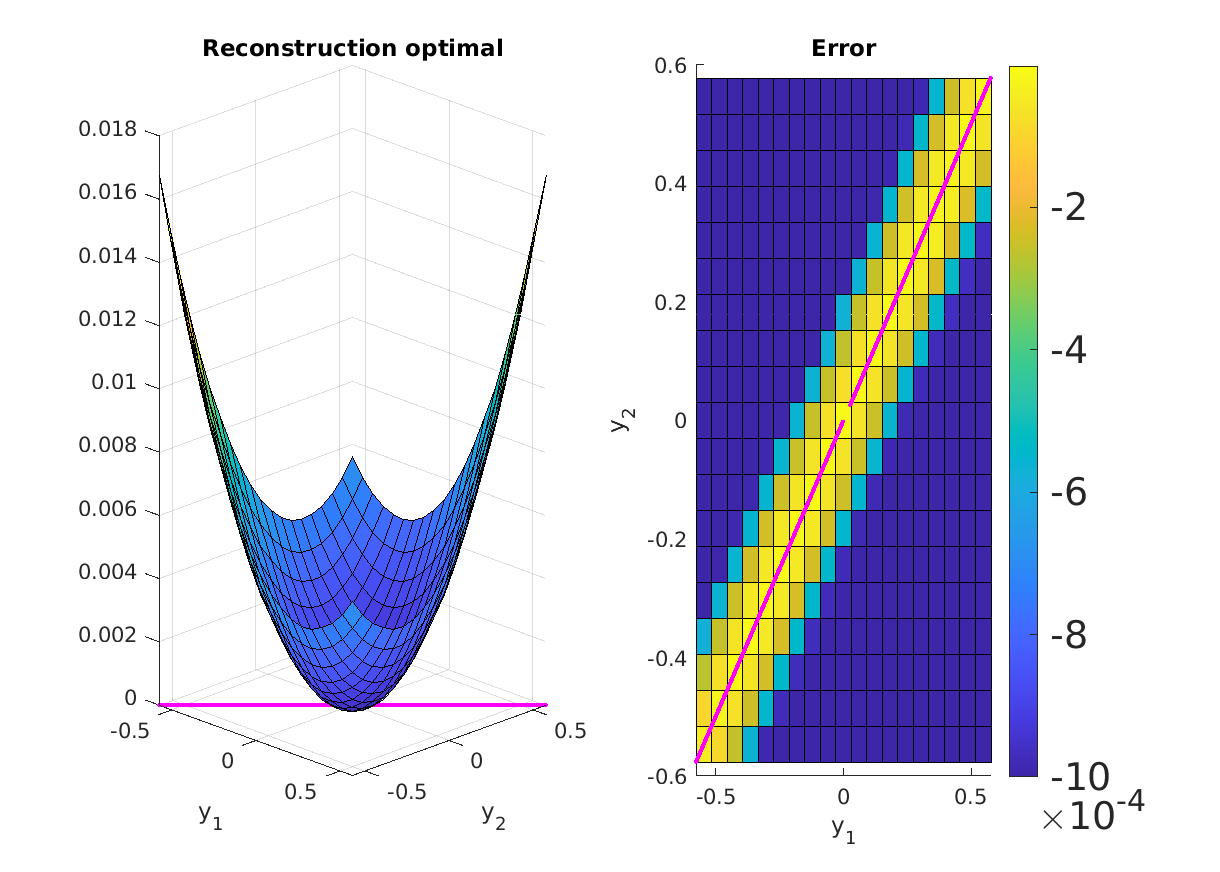}
		\caption{Plot of reconstruction and error.}
		\label{fig:identification_constant_controls}
	\end{subfigure}
	\hfill
	\begin{subfigure}[l]{0.37\textwidth}
		\includegraphics[width=\textwidth]{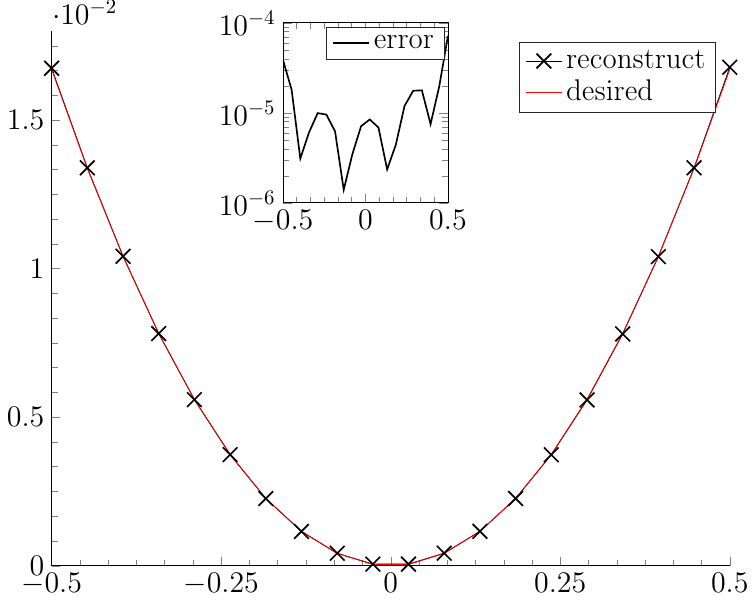}
		\caption{Cut through the diagonal of the domain.}
		\label{fig:identification_constant_controls_diag}
	\end{subfigure}
	\caption{Results of robustness test case (using random controls).}
\end{figure}
%

An important property of the algorithm is that it tries to make the problem more convex.
This is visualized in \cref{fig:convexify}.
In both figures, the functional is plotted for different values of the coefficient in front of the basis elements $y_1^2$ and $y_1y_2$.
Clearly, applying the optimal controls it is evident that the functional is convex and therefore the problem has a unique minimum.
For random controls, there seem to be infinitely local minima; at least the functional is quite flat in some directions.
See also Tables 6.1-6.3 in \cite{BuchwaldCiaramella2021GreedyReconstruction}.

\begin{figure}
	\begin{subfigure}[l]{0.4\textwidth}
		\includegraphics[width=\textwidth]{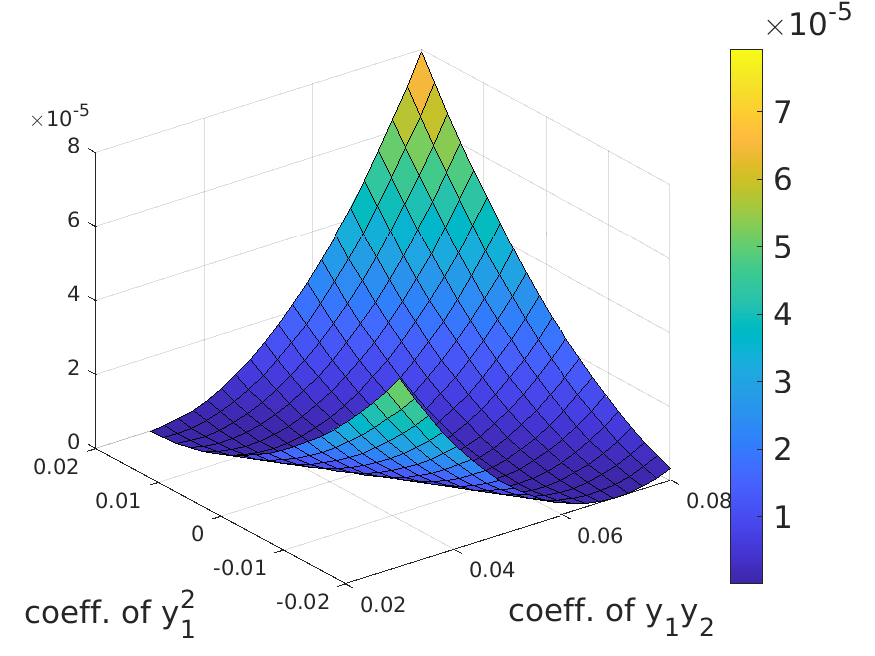}
	\end{subfigure}
	\hfill
	\begin{subfigure}[l]{0.40\textwidth}
		\includegraphics[width=\textwidth]{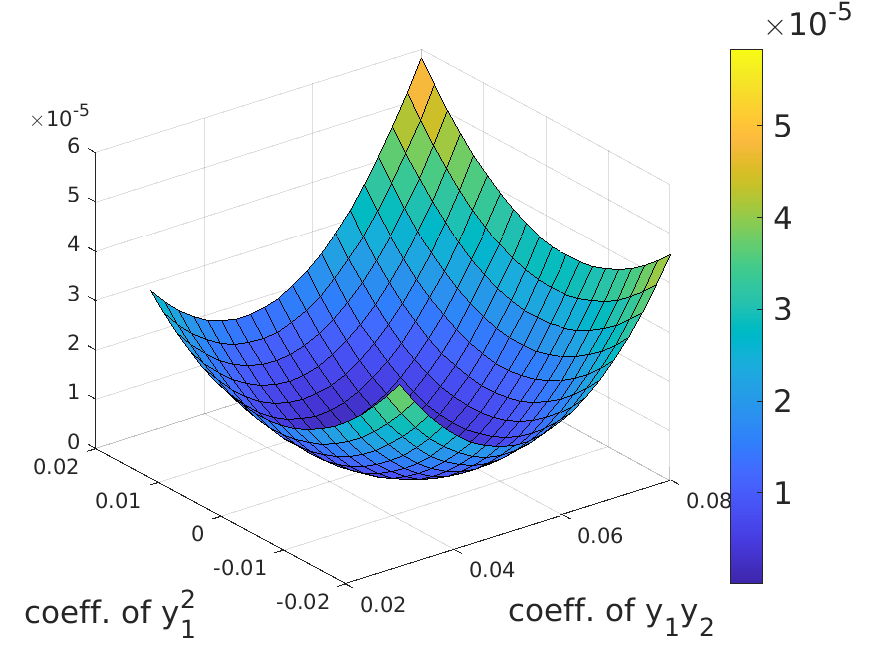}
	\end{subfigure}
	\caption{Making the problem more convex. Left: random controls; Right: optimal controls.}
	\label{fig:convexify}
\end{figure}

\subsection{Sinusoidal and exponential nonlinearity}
\label{sec:ExpNonlin}

Let us now consider the two remaining nonlinearities $G^2_\star(y) = 0.01\sin(2y_1)\sin(2y_2)$ and $G^3_\star(y)= 0.01\exp(2y_1)\exp(2y_2)$ as shown in \cref{subfig:sinusoidal,subfig:exponential}, respectively.
We repeat the experiments from \cref{sec:NumExperiments2} and obtain the results shown in \cref{fig:Sinusoidal_nonlinearity} and \cref{fig:Exponential_nonlinearity}, respectively.

%
\begin{figure}
	\begin{subfigure}[l]{\thirdsPlotsfactor\textwidth}
		\includegraphics[width=\textwidth]{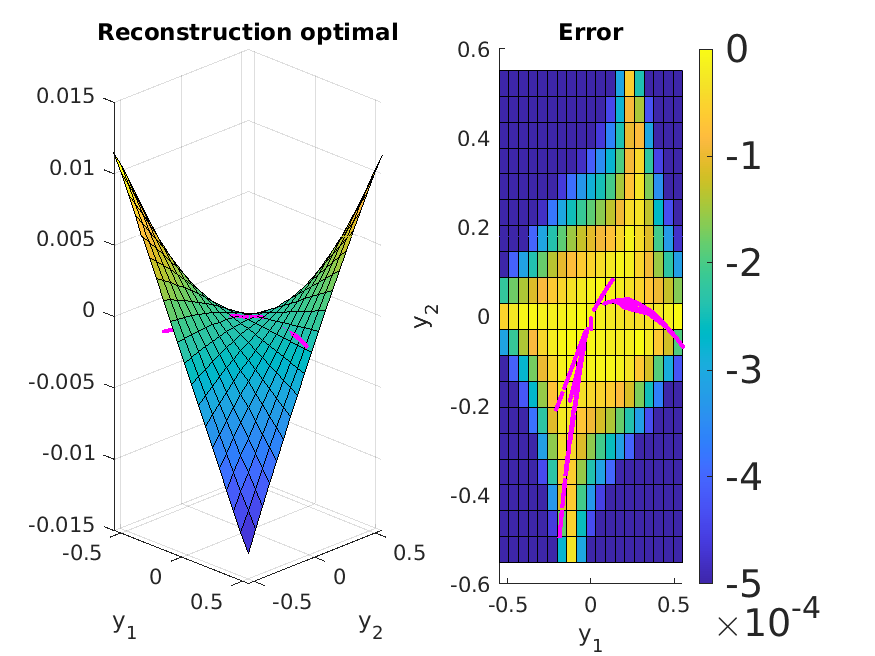}
	\end{subfigure}
	\hfill
	\begin{subfigure}[l]{\thirdsPlotsfactor\textwidth}
		\includegraphics[width=\textwidth]{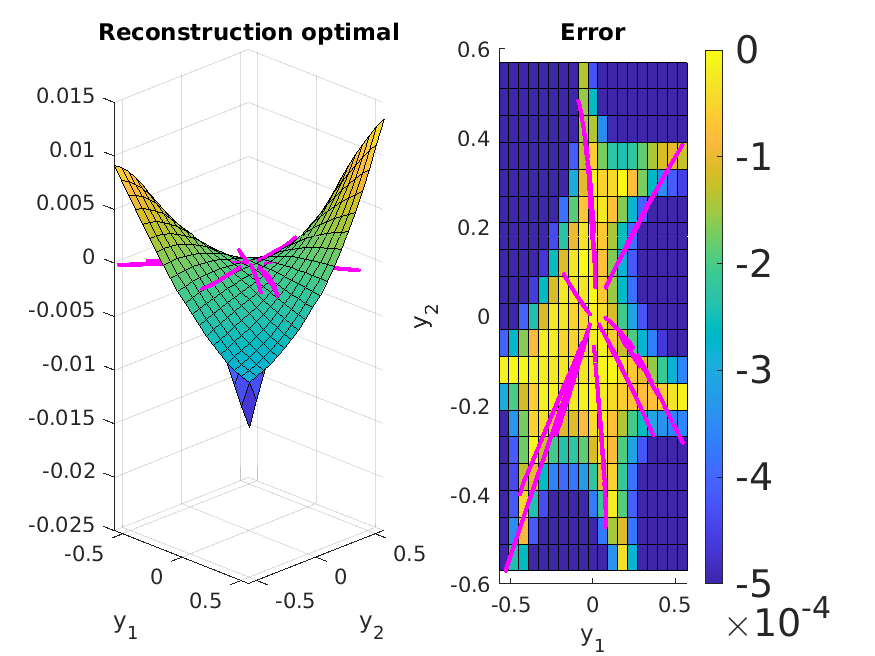}
	\end{subfigure}
	\hfill
	\begin{subfigure}[l]{\thirdsPlotsfactor\textwidth}
		\includegraphics[width=\textwidth]{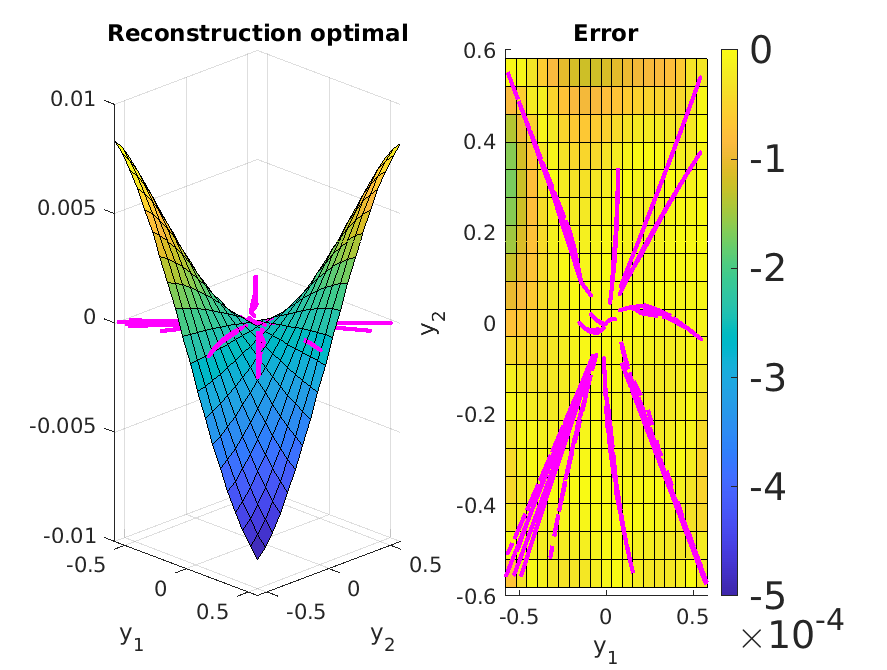}
	\end{subfigure}
	\caption{Reconstruction and error plots (cf. \eqref{eq:definition_error}) for $P=2,3,5$ and the sinusoidal nonlinearity with the solutions curves in magenta.}
	\label{fig:Sinusoidal_nonlinearity}
\end{figure}

%
\begin{figure}
	\begin{subfigure}[l]{\thirdsPlotsfactor\textwidth}
		\includegraphics[width=\textwidth]{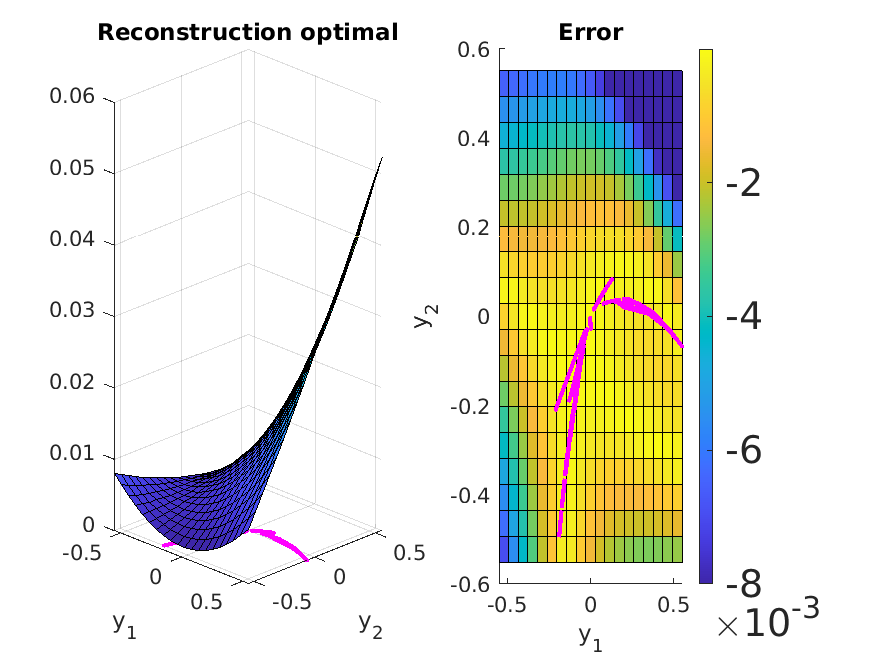}
	\end{subfigure}
	\hfill
	\begin{subfigure}[l]{\thirdsPlotsfactor\textwidth}
		\includegraphics[width=\textwidth]{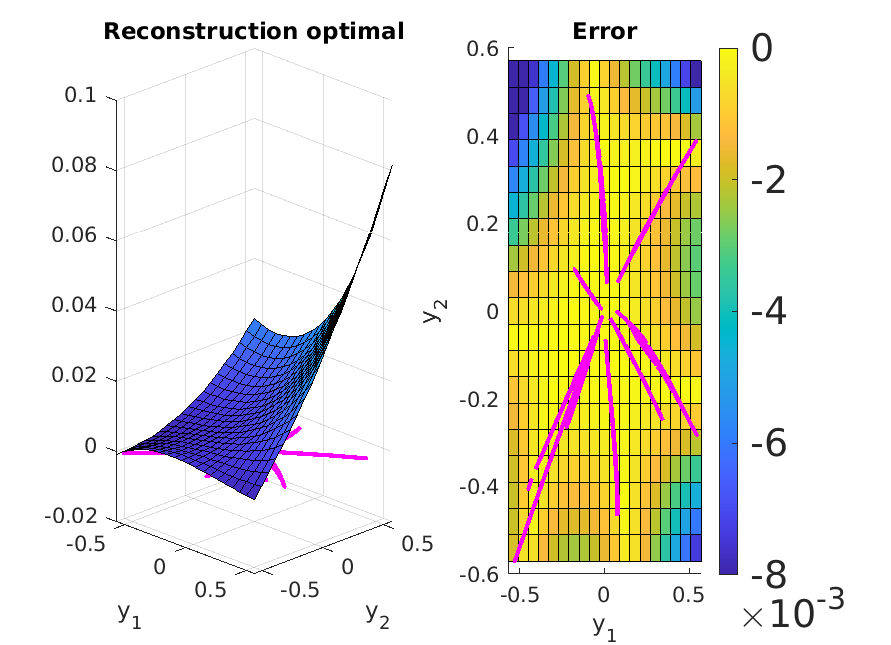}
	\end{subfigure}
	\hfill
	\begin{subfigure}[l]{\thirdsPlotsfactor\textwidth}
		\includegraphics[width=\textwidth]{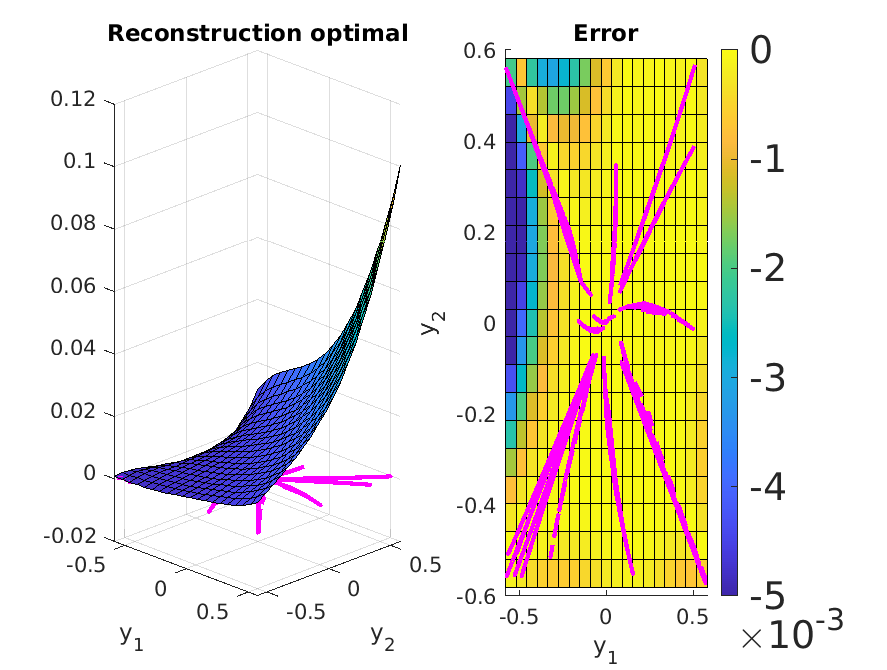}
	\end{subfigure}
	\caption{Reconstruction and error plots (cf. \eqref{eq:definition_error}) for $P=2,3,5$ and the exponential nonlinearity with the solutions curves in magenta.}
	\label{fig:Exponential_nonlinearity} 
\end{figure}

From the error plots in \cref{fig:Exponential_nonlinearity}, we observe that we need $P=5$ to have a sensible reconstruction of the exponential nonlinearity. 
This is to be expected since the Taylor expansion of the exponential function consists of the sum of all monomials.
Furthermore, we see more clearly that the error on each coefficient in the Taylor expansion goes down when increasing the order of reconstruction $P$ (cf. \cref{fig:Error_Taylor_coeffs}).
The reason is that in every iteration functions that are part of the Taylor function enter the reconstruction.

Since within $\Omega_\Sc$ the sinusoidal nonlinearity is very similar to a bilinear one, the error is in principle smaller than for the exponential nonlinearity. 
We also observe that the error in \cref{fig:Sinusoidal_nonlinearity} is worse for $P=3$ than for $P=2$.
This can be explained by the fact that uneven functions enters for $P=3$ compared to $P=2$.
Notice also that the error in the first subfigure in \cref{fig:Sinusoidal_nonlinearity} is symmetric with respect to a diagonal line.
Hence the error that this function imposes outside of the solution sets might be quite high.

\subsection{Error in Taylor coefficients}
\label{sec:ErrorTaylorCoeff}

To round up our numerical experiments, we discuss a different way of measuring the error of the final reconstruction: the absolute value of the difference of the coefficient of each monomial in the Taylor expansion of the true nonlinearity and the reconstructed nonlinearity, respectively.
	
More precisely, let the Taylor expansion up to order $d^2$ around the origin given by
\begin{align}
    \mathcal{T}_{d^2}(y_1,y_2) = 
    \sum_{i_1=0}^d \sum_{i_2=0}^d t_{i_1,i_2} y_1^{i_1}\,y_2^{i_2},
    \qquad\qquad
    t_{i_1,i_2} = \frac{1}{i_1! \, i_2!} \left(\frac{\partial^{i_1+i_2}}{\partial_{y_1}^{i_1}\, \partial_{y_2}^{i_2}} g(0,0) \right).
    \label{eq:diff_taylor_coeff}
\end{align} 
Now, we compare the coefficients $\balpha = (\alpha_{i_1,i_2})_{i_1,i_2=0}^d$ from our reconstruction of the polynomials given in \eqref{eq:Polynomials_definition} with the coefficients $t_{i_1,i_2}$ of the Taylor expansion given in \eqref{eq:diff_taylor_coeff}. 
Since the Taylor expansion gives rise to the same basis that we consider in our algorithm, it is possible to compare the corresponding coefficients.
The difference in each coefficient is shown in the following plots in increasing order of the coefficients.
In \cref{fig:Error_Taylor_coeffs}, the error in the Taylor coefficients (up to the corresponding approximation order) is plotted. 
In all of these plots, we observe a saturation effect of the error for higher orders of degree.
This can be seen most strongly for $P=3$ and $P=5$.
The reason for this is that for a higher order of polynomial degree the influence of the corresponding monomial growths with the distance to the origin. Since we consider here a quite small domain, it is harder to reconstruct the correct coefficient.

\begin{figure}
	\begin{subfigure}[l]{\thirdsPlotsfactor\textwidth}
		\includegraphics[width=\textwidth]{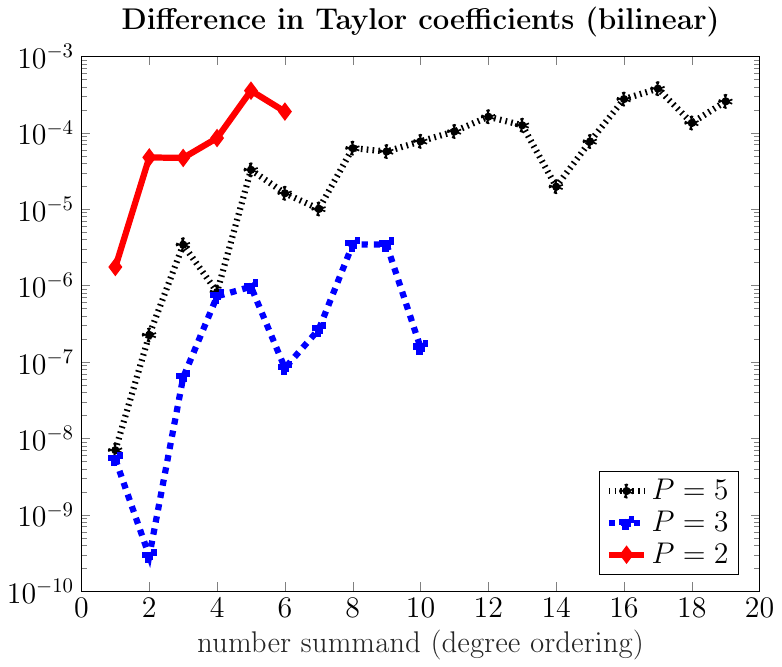}
	\end{subfigure}
    \hfill
	\begin{subfigure}[l]{\thirdsPlotsfactor\textwidth}
		\includegraphics[width=\textwidth]{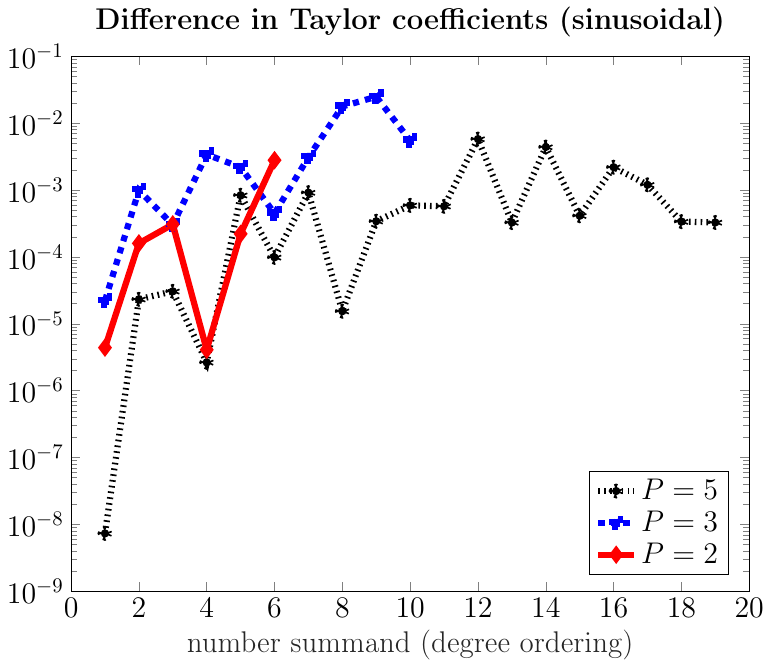}
	\end{subfigure}
	\hfill
	\begin{subfigure}[l]{\thirdsPlotsfactor\textwidth}
		\includegraphics[width=\textwidth]{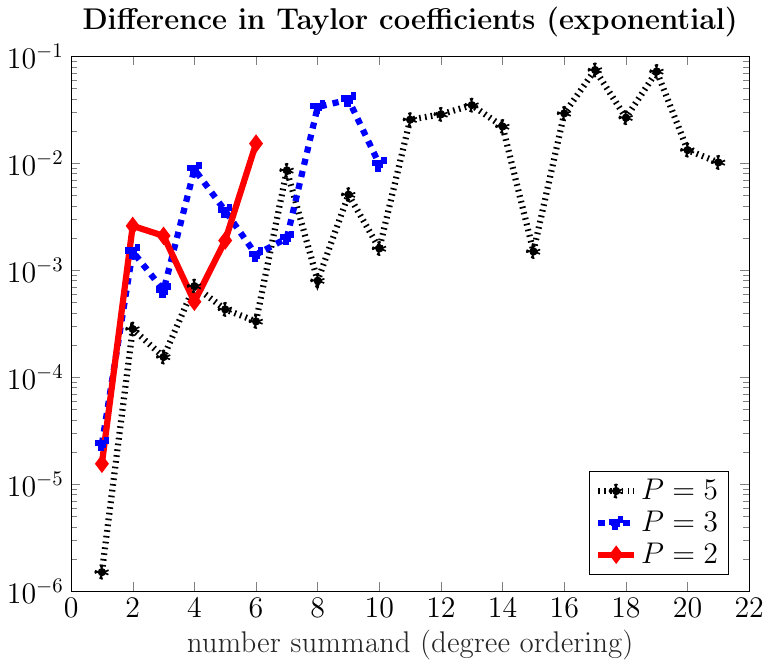}
	\end{subfigure}
	
	\caption{Difference in coefficients of the (true) Taylor expansion (cf. \eqref{eq:diff_taylor_coeff}) and the reconstructed nonlinearity for $P=2,3,5$ for bilinear, sinusoidal and exponential nonlinearity.}
	\label{fig:Error_Taylor_coeffs}
\end{figure}

\section{Conclusion}
\label{sec:Conclusion}

In this work, we presented a greedy reconstruction algorithm to identify unknown operators in nonlinear elliptic models. 
We proved the Lipschitz continuity of the parameter-to-state map and its inverse.
We performed several numerical experiments that successfully validate our proposed scheme.

This paper represents a significant step towards untangling the mysteries of unknown nonlinear operators within semilinear elliptic models. 
By harnessing the power of optimal control and active learning, we pave the way for a deeper understanding and more accurate predictions of complex physical phenomena governed by these models.

\section*{Acknowledgments}

We would like to express our gratitude to Jacob Körner (University of Würzburg), Jörg Weber (University of Vienna), and Behzad Azmi (University of Konstanz) for their beneficial remarks and fruitful discussions.
We would also like to express our gratitude to the anonymous referees for their helpful questions and remarks.

\textbf{Competing interests.}
The authors have no relevant financial or non-financial interests to disclose.

\textbf{Author contributions.}
All four authors  contributed equally to the present manuscript.

\bibliographystyle{acm}

\begin{thebibliography}{10}
	
	\bibitem{AdamsFournier2003SobolevSpaces}
	{\sc Adams, R.~A., and Fournier, J. J.~F.}
	\newblock {\em Sobolev spaces}, second~ed., vol.~140 of {\em Pure and Applied
		Mathematics (Amsterdam)}.
	\newblock Elsevier/Academic Press, Amsterdam, 2003.
	
	\bibitem{Alessandrini1989IdentificationEllipticEquation}
	{\sc Alessandrini, G.}
	\newblock An identification problem for an elliptic equation in two variables.
	\newblock {\em Ann. Mat. Pura Appl. (4) 145\/} (1986), 265--295.
	
	\bibitem{Alt2012LineareFunktional}
	{\sc Alt, H.~W.}
	\newblock {\em Lineare Funktionalanalysis: Eine anwendungsorientierte
		Einf{\"u}hrung}.
	\newblock Springer-Verlag, 2012.
	
	\bibitem{Amann1975NonlinearFixedPoint}
	{\sc Amann, H.}
	\newblock Fixed point equations and nonlinear eigenvalue problems in ordered
	{B}anach spaces.
	\newblock {\em SIAM Rev. 18}, 4 (1976), 620--709.
	
	\bibitem{BerettaVogelius1991InverseProblemMHD}
	{\sc Beretta, E., and Vogelius, M.}
	\newblock An inverse problem originating from magnetohydrodynamics.
	\newblock {\em Arch. Rational Mech. Anal. 115}, 2 (1991), 137--152.
	
	\bibitem{Bonnans1999PrincipePontryagin}
	{\sc Bonnans, J.~F., and Casas, E.}
	\newblock Un principe de {P}ontryagine pour le contr{\^o}le des syst{\'e}mes
	semilin{\'e}aires elliptiques.
	\newblock {\em J. Differ. Equ. 90}, 2 (1991), 288--303.
	
	\bibitem{BrezisBrowder1978NonlinEllipticBVP}
	{\sc Br\'{e}zis, H., and Browder, F.~E.}
	\newblock Strongly nonlinear elliptic boundary value problems.
	\newblock {\em Ann. Scuola Norm. Sup. Pisa Cl. Sci. (4) 5}, 3 (1978), 587--603.
	
	\bibitem{BuchwaldCiaramella2021GreedyReconstruction}
	{\sc Buchwald, S., Ciaramella, G., and Salomon, J.}
	\newblock Analysis of a greedy reconstruction algorithm.
	\newblock {\em SIAM J. Control Optim. 59}, 6 (2021), 4511--4537.
	
	\bibitem{BuchwaldCiaramella2021ReconstructionofA}
	{\sc Buchwald, S., Ciaramella, G., and Salomon, J.}
	\newblock Gauss--{N}ewton {O}riented {G}reedy {A}lgorithms for the
	{R}econstruction of {O}perators in {N}onlinear {D}ynamics.
	\newblock {\em SIAM J. Control Optim. 62}, 3 (2024), 1343--1368.
	
	\bibitem{BuchwaldCiaramella2021OGR_SpinIdentification}
	{\sc Buchwald, S., Ciaramella, G., Salomon, J., and Sugny, D.}
	\newblock Greedy reconstruction algorithm for the identification of spin
	distribution.
	\newblock {\em Phys. Rev. A 104}, 6 (2021), Paper No. 063112, 9.
	
	\bibitem{Buchwald2024CodeReconstructionSpin}
	{\sc Buchwald, S., Ciaramella, G., Salomon, J., and Sugny, D.}
	\newblock A {SPIRED} code for the reconstruction of spin distribution.
	\newblock {\em Computer Physics Communications\/} (2024), 109126.
	
	\bibitem{CasasRoesch2020NonmonotoneSemilinearElliptic}
	{\sc Casas, E., Mateos, M., and R\"{o}sch, A.}
	\newblock Analysis of control problems of nonmontone semilinear elliptic
	equations.
	\newblock {\em ESAIM Control Optim. Calc. Var. 26\/} (2020), Paper No. 80, 21.
	
	\bibitem{CasasTroltzschOCPQuasilinearEllipticEQ}
	{\sc Casas, E., and Tr\"{o}ltzsch, F.}
	\newblock First- and second-order optimality conditions for a class of optimal
	control problems with quasilinear elliptic equations.
	\newblock {\em SIAM J. Control Optim. 48}, 2 (2009), 688--718.
	
	\bibitem{CasasWachsmuth2023ExistenceSolutionOCPSemilinPDE}
	{\sc Casas, E., and Wachsmuth, D.}
	\newblock A note on existence of solutions to control problems of semilinear
	partial differential equations.
	\newblock {\em SIAM J. Control Optim. 61}, 3 (2023), 1095--1112.
	
	\bibitem{CiaramellaGander2018LectureIterative}
	{\sc Ciaramella, G., and Gander, M.~J.}
	\newblock {\em Iterative methods and preconditioners for systems of linear
		equations}, vol.~19 of {\em Fundamentals of Algorithms}.
	\newblock Society for Industrial and Applied Mathematics (SIAM), Philadelphia,
	PA, 2022.
	
	\bibitem{ClermontZenker2015InvProbBiology}
	{\sc Clermont, G., and Zenker, S.}
	\newblock The inverse problem in mathematical biology.
	\newblock {\em Math. Biosci. 260\/} (2015), 11--15.
	
	\bibitem{Evans2010PDEs}
	{\sc Evans, L.~C.}
	\newblock {\em Partial differential equations}, second~ed., vol.~19 of {\em
		Graduate Studies in Mathematics}.
	\newblock Amer. Math. Soc., Providence, RI, Providence, RI, 2010.
	
	\bibitem{GambinoLombardoSammartino2009LotkaVolterraPDEUncontrolledNonlinear}
	{\sc Gambino, G., Lombardo, M.~C., and Sammartino, M.}
	\newblock A velocity-diffusion method for a {L}otka-{V}olterra system with
	nonlinear cross and self-diffusion.
	\newblock {\em Appl. Numer. Math. 59}, 5 (2009), 1059--1074.
	
	\bibitem{Grisvard2011EllipticProblemsNonsmoothDomains}
	{\sc Grisvard, P.}
	\newblock {\em Elliptic problems in nonsmooth domains}, vol.~69 of {\em
		Classics in Applied Mathematics}.
	\newblock Society for Industrial and Applied Mathematics (SIAM), Philadelphia,
	PA, 2011.
	\newblock Reprint of the 1985 original [MR0775683], With a foreword by Susanne
	C. Brenner.
	
	\bibitem{HaberAscherOldenburg2000OptimizationNonlinInverse}
	{\sc Haber, E., Ascher, U.~M., and Oldenburg, D.}
	\newblock On optimization techniques for solving nonlinear inverse problems.
	\newblock {\em Inverse Problems 16}, 5 (2000), 1263--1280.
	\newblock Electromagnetic imaging and inversion of the Earth's subsurface.
	
	\bibitem{Ibanez2017OC_LotkaVolterra}
	{\sc Iba{\~n}ez, A.}
	\newblock Optimal control of the lotka--volterra system: turnpike property and
	numerical simulations.
	\newblock {\em J. Biol. Dyn. 11}, 1 (2017), 25--41.
	
	\bibitem{Isakov2017InverseProblemPDE}
	{\sc Isakov, V.}
	\newblock {\em Inverse problems for partial differential equations}, third~ed.,
	vol.~127 of {\em Applied Mathematical Sciences}.
	\newblock Springer, Cham, 2017.
	
	\bibitem{IsakovNachman19952DSemilinInverseProblem}
	{\sc Isakov, V., and Nachman, A.~I.}
	\newblock Global uniqueness for a two-dimensional semilinear elliptic inverse
	problem.
	\newblock {\em Trans. Amer. Math. Soc. 347}, 9 (1995), 3375--3390.
	
	\bibitem{Johansson2023inverseSemilinearEllipticPDEGeneral}
	{\sc Johansson, D., Nurminen, J., and Salo, M.}
	\newblock Inverse problems for semilinear elliptic pde with a general
	nonlinearity $ a (x, u) $.
	\newblock {\em arXiv preprint arXiv:2312.12196\/} (2023).
	
	\bibitem{JovanovicSueli14}
	{\sc Jovanovi\'c, B. s.~S., and S\"uli, E.}
	\newblock {\em Analysis of finite difference schemes}, vol.~46 of {\em Springer
		Series in Computational Mathematics}.
	\newblock Springer, London, 2014.
	\newblock For linear partial differential equations with generalized solutions.
	
	\bibitem{KumpatiNarendra1990IdentificationNonLin}
	{\sc Kumpati, S.~N., Kannan, P., et~al.}
	\newblock Identification and control of dynamical systems using neural
	networks.
	\newblock {\em IEEE trans. neural netw. 1}, 1 (1990), 4--27.
	
	\bibitem{MadaySalomon2009GreedyQuantum}
	{\sc Maday, Y., and Salomon, J.}
	\newblock A greedy algorithm for the identification of quantum systems.
	\newblock In {\em Proceedings of the 48h IEEE Conference on Decision and
		Control (CDC) held jointly with 2009 28th Chinese Control Conference\/}
	(2009), IEEE, pp.~375--379.
	
	\bibitem{MandtHoffmannBlei2017SBDBayesian}
	{\sc Mandt, S., Hoffman, M.~D., and Blei, D.~M.}
	\newblock Stochastic gradient descent as approximate {B}ayesian inference.
	\newblock {\em J. Mach. Learn. Res. 18\/} (2017), Paper No. 134, 35.
	
	\bibitem{MartinPierre1992NonLinReactDiff}
	{\sc Martin, Jr., R.~H., and Pierre, M.}
	\newblock Nonlinear reaction-diffusion systems.
	\newblock In {\em Nonlinear equations in the applied sciences}, vol.~185 of
	{\em Math. Sci. Engrg.} Academic Press, Boston, MA, 1992, pp.~363--398.
	
	\bibitem{MortujaChaube2021AnalysisPredatorPraySystem}
	{\sc {Mortuja}, M.~G., {Chaube}, M.~K., and {Kumar}, S.}
	\newblock {Dynamic analysis of a predator-prey system with nonlinear prey
		harvesting and square root functional response}.
	\newblock {\em Chaos Solitons Fractals 148\/} (July 2021), 111071.
	
	\bibitem{NocedalWright2006NumOpt}
	{\sc Nocedal, J., and Wright, S.~J.}
	\newblock {\em Numerical optimization}, second~ed.
	\newblock Springer Series in Operations Research and Financial Engineering.
	Springer, New York, 2006.
	
	\bibitem{Pao1982NonlinReactDiffSystems}
	{\sc Pao, C.~V.}
	\newblock On nonlinear reaction-diffusion systems.
	\newblock {\em J. Math. Anal. Appl. 87}, 1 (1982), 165--198.
	
	\bibitem{QuarantaLacarbonara2020reviewIntelligenceIdentification}
	{\sc Quaranta, G., Lacarbonara, W., and Masri, S.~F.}
	\newblock A review on computational intelligence for identification of
	nonlinear dynamical systems.
	\newblock {\em Nonlinear Dyn. 99}, 2 (2020), 1709--1761.
	
	\bibitem{RockafellarWets1998VariationalAnalysis}
	{\sc Rockafellar, R.~T., and Wets, R. J.-B.}
	\newblock {\em Variational analysis}, vol.~317 of {\em Grundlehren der
		mathematischen Wissenschaften [Fundamental Principles of Mathematical
		Sciences]}.
	\newblock Springer-Verlag, Berlin, 1998.
	
	\bibitem{Roesch1996StabilityIdentificationHeatTransfer}
	{\sc R\"{o}sch, A.}
	\newblock Stability estimates for the identification of nonlinear heat transfer
	laws.
	\newblock {\em Inverse Problems 12}, 5 (1996), 743--756.
	
	\bibitem{SanotsaWeitz2011InverseProblemRectionKinetics}
	{\sc Santosa, F., and Weitz, B.}
	\newblock An inverse problem in reaction kinetics.
	\newblock {\em J. Math. Chem. 49}, 8 (2011), 1507--1520.
	
	\bibitem{Sever2006SystemSemilinElliptic}
	{\sc Sever, M.}
	\newblock An existence theorem for some semilinear elliptic systems.
	\newblock {\em J. Differential Equations 226}, 2 (2006), 572--593.
	
	\bibitem{Treoltzsch2010OCP_PDE}
	{\sc Tr{\"o}ltzsch, F.}
	\newblock {\em Optimal control of partial differential equations: Theory,
		methods, and applications}, vol.~112 of {\em Graduate Studies in
		Mathematics}.
	\newblock American Mathematical Society, Providence, RI, Providence, RI, 2010.
	
	\bibitem{YangScheer2019SpatialModlationMembraneResonators}
	{\sc {Yang}, F., {Rochau}, F., {Huber}, J.~S., {Brieussel}, A., {Rastelli}, G.,
		{Weig}, E.~M., and {Scheer}, E.}
	\newblock {Spatial Modulation of Nonlinear Flexural Vibrations of Membrane
		Resonators}.
	\newblock {\em Phys. Rev. Lett. 122}, 15 (Apr. 2019), 154301.
	
\end{thebibliography}

\end{document}